\definecolor{darkblue}{rgb}{0,0,0.8}
\let\orgautoref\autoref
\renewcommand{\autoref}[1]
        {\def\equationautorefname~##1\null{~(##1)\null}%
         \orgautoref{#1}}
\def\house#1{{%
    \setbox0=\hbox{$#1$}
    \vrule height \dimexpr\ht0+1.4pt width .4pt depth \dp0\relax
    \vrule height \dimexpr\ht0+1.4pt width \dimexpr\wd0+2pt depth \dimexpr-\ht0-1pt\relax
    \llap{$#1$\kern1pt}
    \vrule height \dimexpr\ht0+1.4pt width .4pt depth \dp0\relax
}}
\def\roof#1{{%
    \setbox0=\hbox{$#1$}
    \vrule height \dimexpr\ht0+1.4pt width .8pt depth \dp0\relax
    \vrule height \dimexpr\ht0+1.8pt width \dimexpr\wd0+2pt depth \dimexpr-\ht0-1pt\relax
    \llap{$#1$\kern1pt}
    \vrule height \dimexpr\ht0+1.4pt width .8pt depth \dp0\relax
}}
\begin{document}
\bibliographystyle{plainlmj}

\begin{topmatter}



\title{The reciprocal algebraic integers having small house}


\author{Dragan Stankov}
\institution{Katedra Matematike RGF-a 11000 Beograd, \DJ u\v{s}ina 7, Serbia}
\email{dstankov@rgf.bg.ac.rs}

\begin{abstract}
Let $\alpha$ be an algebraic integer of degree $d$, which is reciprocal. The house of $\alpha$ is the largest modulus of its conjugates. We proved that $d$-th power of the house of reciprocal $\alpha$ has a limit point. We presented a property of antireciprocal hexanomials. We compute the minimum of the houses of all reciprocal algebraic integers of degree $d$ having the minimal polynomial which is a factor of a $D$-th degree
reciprocal or antireciprocal polynomial with at most eight monomials, say $\mathrm{mr}(d)$, for $d$ at most 180, $D\le 1.5d$ and $D\le 210$. We show that it is not necessary to take into account unprimitive polynomials.
The computations suggest several conjectures.

\end{abstract}

\Keywords




\end{topmatter}


\section{Introduction}
\label{intro}
Let $\alpha$ be an algebraic integer of degree $d$, with conjugates
$\alpha=\alpha_1, \alpha_2,\ldots,\alpha_d$ and minimal
polynomial $P$. The house of $\alpha$ (and of $P$) is defined by:
\[\house{\alpha} = \max\limits_{1\leq i\leq d}|\alpha_i|.\]
The Mahler measure of $\alpha$ is $M(\alpha) = \prod_{i=1}^{d}
\max(1, |\alpha_i|)$.
Clearly,
$\house{\alpha} > 1$, and a theorem of Kronecker 
tells us that $\house{\alpha} = 1$ if and only if $\alpha$ is a root
of unity. In 1965, Schinzel and Zassenhaus \cite{schinzel1965} have made the following conjecture:
\begin{conjecture}[SZ]
There is a constant $c > 0$ such that if $\alpha$ is not a root of unity, then $\house{\alpha}\ge 1 + c/d$.
\end{conjecture}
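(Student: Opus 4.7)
The plan is to couple the classical arithmetic input $|N(\alpha^n - 1)| \ge 1$, valid whenever $\alpha$ is not a root of unity, with an analytic upper bound in terms of the house. Writing $H = \house{\alpha}$, the identity $N(\alpha^n-1) = \prod_{i=1}^d (\alpha_i^n - 1) \in \mathbb{Z}\setminus\{0\}$ gives
\[ 1 \le \prod_{i=1}^d |\alpha_i^n - 1| \le (H^n+1)^d, \]
and optimising in $n$ already yields an $H \ge 1 + c/d^2$ bound of the shape originally obtained by Schinzel and Zassenhaus. The conjecture asks for one extra factor of $d$.

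The next step in the plan would be a Dobrowolski-style auxiliary function argument: construct a polynomial $F \in \mathbb{Z}[z]$ of degree roughly $d$ that vanishes with high multiplicity at the conjugates $\alpha_i$, bound $|F|$ analytically on the disc of radius $H$, and then use integrality (typically a divisibility mod a small prime, exploiting the fact that $\alpha^p \equiv \alpha \pmod{p}$) to force either $F \equiv 0$ or an improved lower bound for $H$. This route recovers the known bound $H \ge 1 + c(\log\log d)^3/(d\log^3 d)$ but does not reach the conjectured $1+c/d$. To close the gap one needs a much more parsimonious construction of the flavour introduced by Dimitrov: build from $P$ a Chebyshev-like companion polynomial of degree $O(d)$, bound the Mahler measure of a resultant or difference of $P$ with its conjugate under $z\mapsto -z$ or $z\mapsto z^k$, and extract the bound via a $p=2$ integrality argument that detects the non-cyclotomic character of $\alpha$.

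The main obstacle, and the reason the conjecture stood open from $1965$ until $2019$, is precisely the gap of a factor of $d$ between the cheap arithmetic input and the desired conclusion. Naive integer polynomials built from $P$ tend to have either too large a degree or too large a Mahler measure to close this gap, so merely following the recipe above is not enough. The delicate part of the plan is the algebraic construction of an auxiliary polynomial that is simultaneously sparse in degree, of controlled Mahler measure, and provably nonvanishing, together with the arithmetic mechanism (a $p$-adic or Chebyshev identity) which converts its existence into a lower bound for $H$. I would expect almost all the technical effort to concentrate in that step; the analytic and integrality estimates are comparatively standard once the right companion polynomial has been produced.
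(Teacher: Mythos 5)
The statement you were asked to prove is the Schinzel--Zassenhaus conjecture, and the paper does not prove it: it is stated purely as a conjecture (attributed to Schinzel and Zassenhaus, 1965) that motivates the computations in the rest of the article, so there is no proof in the paper to compare yours against. Your submission is likewise not a proof. It is an honest and largely accurate survey of the known partial results --- the elementary norm inequality $1\le\prod_i|\alpha_i^n-1|$ giving a bound of order $1+c/d^2$, and Dobrowolski-type auxiliary-function arguments giving $1+c(\log\log d)^3/(d\log^3 d)$ --- followed by a gesture toward the strategy that ultimately succeeded, but you explicitly concede that ``merely following the recipe above is not enough'' and you never construct the auxiliary object on which everything hinges. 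That missing step is the entire content of the problem.

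Concretely, the gap is this: your sketch of the final step (``a Chebyshev-like companion polynomial,'' ``a resultant or difference of $P$ with its conjugate under $z\mapsto -z$,'' ``a $p=2$ integrality argument'') does not match what actually closes the factor-of-$d$ gap, and as written it is not executable. Dimitrov's 2019 resolution is not a Dobrowolski-style auxiliary-function argument at all; it shows that a certain algebraic function built from $P$ (essentially a square root of $P(z^2)P(z^4)$ normalized at infinity) has integer Laurent coefficients, and then invokes a potential-theoretic bound --- Dubinin's theorem on the transfinite diameter of ``hedgehog'' sets --- together with the Pólya--Bertrandias rationality criterion to force $\house{\alpha}\ge 2^{1/(4d)}$. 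None of the three essential ingredients (the integrality of the square root's coefficients, the capacity estimate, the rationality criterion) appears in your plan, so the argument cannot be completed from what you have written. Within the context of this paper the correct response would have been to note that the statement is a conjecture, not a theorem, and that no proof is offered or expected.
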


If a polynomial has only eight non-zero coefficients then it is called an octanomial. Similarly, if the number
of non-zero coefficients is five, six and seven, such polynomial is called pentanomial, hexanomial and heptanomial respectively.
A polynomial $P(x)$ of degree $d$ is antireciprocal if it satisfies $P(x)=-x^dP(1/x)$.
The height of a polynomial is defined to be the maximum of the moduli of its coefficients.
Let $\mathrm{m}(d)$ denote the minimum of $\house{\alpha}$ over $\alpha$ of degree $d$ which are not roots of unity.
Let an $\alpha$ attaining $\mathrm{m}(d)$ be called extremal. We say that $\alpha$ is reciprocal if $\alpha^{-1}$ is a conjugate
of $\alpha$, i.e. $X^dP(1/X) = P(X)$. Let $\mathrm{mr}(d)$ denote the minimum of $\house{\alpha}$ over reciprocal $\alpha$ of degree $d$ which are not roots of
unity. 
Let an $\alpha$ attaining $\mathrm{mr}(d)$ be called extremal reciprocal.
In 1985, D. Boyd \cite{10.2307/2008062} conjectured, using a result of C.J. Smyth \cite{Smy1}, that $c$ should be equal to $3/2 \log \theta$ where $\theta = 1.324717 \ldots$ is the smallest Pisot number, the real root of the polynomial $x^3 - x - 1$. Intending to verify his conjecture that extremal $\alpha$ are always nonreciprocal, Boyd has computed the smallest houses for reciprocal polynomials of even degrees $\le 16$. Wu and Zhang \cite{WU2017170} continued the Boyd's computation with even degrees $\le 42$. They showed in their Table 5 that the minimal polynomial of extremal reciprocal algebraic integer can be written as a factor of a reciprocal polynomial with at most eight monomials. The same fact is valid for many polynomials having Mahler measure less than 1.3 and has been used for creation of the Mossinghoff's list of such polynomials \cite{Mossinghoffwebsite}. We used here this idea to search for extremal reciprocals of degree $d$ having the minimal polynomial which is a factor of a $D$-th degree reciprocal or antireciprocal hexanomial or octanomial, where $d$ is at most 180 and $D$ is at most 210.

\section{Theorems and proofs}

A polynomial $P(x)$ is primitive if it cannot be expressed as a polynomial in $x^k$, for some
$k \ge 2$. Clearly, if $p$ is an odd prime number then any reciprocal polynomial of degree $2p$ with more than three monomials has to be primitive.
It is easy to verify that
\begin{equation}\label{Unprim}
\house{P(x^k)} = \sqrt[k]{\house{P(x)}}.
\end{equation}
Let $\mathrm{mrp}(d)$ denote the minimum of $\house{\alpha}$ over reciprocal algebraic integer $\alpha$ of degree $d$ which are not roots of unity and which have a primitive minimal polynomial.
Let $\mathrm{mrp}(d)$ is attained for $\alpha_d$ with minimal reciprocal primitive polynomial $R_d(x)$. Let $\alpha_d$ be called extremal reciprocal primitive. Clearly
\begin{equation}\label{mpleqmrp}
\mathrm{mr}(d) \leq \mathrm{mrp}(d),
\end{equation}
and the equality is strict if and only if the $\alpha$ attaining $\mathrm{mr}(d)$ is not a root of a primitive polynomial.
\begin{lemma}\label{sec:PolProdRec}
Let $k_i$, $k_j$ be integers and $d_i$, $d_j$ be even integers such that $k_id_i=k_jd_j=d$. If $\mathrm{mrp}^{d_i}(d_i)<\mathrm{mrp}^{d_j}(d_j)$ then the house of $R_{d_i}(x^{k_i})$ is less than the house of $R_{d_j}(x^{k_j})$.
\end{lemma}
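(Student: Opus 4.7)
The plan is to reduce everything to equation~(\ref{Unprim}) together with the definition of $\mathrm{mrp}$. Since $R_{d_i}(x)$ is the minimal polynomial of $\alpha_{d_i}$, which attains $\mathrm{mrp}(d_i)$, we have $\house{R_{d_i}(x)} = \mathrm{mrp}(d_i)$, and similarly for $R_{d_j}$. Applying (\ref{Unprim}) to $P=R_{d_i}$ with exponent $k_i$ immediately gives
\[
\house{R_{d_i}(x^{k_i})} = \mathrm{mrp}(d_i)^{1/k_i},
\]
and the analogous identity holds with $i$ replaced by $j$.

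Next I would rewrite the right-hand sides on a common scale by using the hypothesis $k_i d_i = k_j d_j = d$, which turns $1/k_i$ into $d_i/d$ and $1/k_j$ into $d_j/d$. Thus
\[
\house{R_{d_i}(x^{k_i})} = \bigl(\mathrm{mrp}(d_i)^{d_i}\bigr)^{1/d},\qquad \house{R_{d_j}(x^{k_j})} = \bigl(\mathrm{mrp}(d_j)^{d_j}\bigr)^{1/d}.
\]
The hypothesis $\mathrm{mrp}^{d_i}(d_i) < \mathrm{mrp}^{d_j}(d_j)$ is exactly $\mathrm{mrp}(d_i)^{d_i} < \mathrm{mrp}(d_j)^{d_j}$, so taking $d$-th roots (a strictly increasing operation on positive reals) yields the desired inequality $\house{R_{d_i}(x^{k_i})} < \house{R_{d_j}(x^{k_j})}$.

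There is essentially no obstacle: the only nontrivial ingredient is (\ref{Unprim}), and the rest is algebraic manipulation of exponents. The evenness of $d_i,d_j$ and the assumption that $R_{d_i},R_{d_j}$ are primitive are not used directly in the inequality; they are implicit in the hypothesis that $\mathrm{mrp}(d_i)$ and $\mathrm{mrp}(d_j)$ are well-defined and attained by the polynomials $R_{d_i}$ and $R_{d_j}$, which the statement already assumes.
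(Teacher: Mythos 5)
Your proposal is correct and follows essentially the same route as the paper: the paper likewise raises the hypothesis to the power $1/d$ to get $\mathrm{mrp}^{1/k_i}(d_i)<\mathrm{mrp}^{1/k_j}(d_j)$ and then identifies these quantities with the houses of $R_{d_i}(x^{k_i})$ and $R_{d_j}(x^{k_j})$ via~(\ref{Unprim}). You merely perform the two steps in the opposite order, which changes nothing of substance.
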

\begin{proof}
. Raising both sides of $\mathrm{mrp}^{d_i}(d_i)<\mathrm{mrp}^{d_j}(d_j)$ to the power $1/d$ we obtain $\mathrm{mrp}^{1/k_i}(d_i)<\mathrm{mrp}^{1/k_j}(d_j)$. It remains to recall that the house of $R_{d_i}(x^{k_i})$ is equal to $\mathrm{mrp}^{1/k_i}(d_i)$ and the house of $R_{d_j}(x^{k_j})$ is equal to $\mathrm{mrp}^{1/k_j}(d_j)$.\qed
\end{proof}

\begin{corollary}\label{sec:compositeRec}
Let $d$ be an even natural number and let $d_0,d_1,\ldots,d_m$ be all even natural divisors of $d$. Let $\mathrm{mrp}(d_j)$ is attained for a reciprocal $\alpha_{d_j}$ with minimal polynomial $R_{d_j}(x)$ where, $R_{d_j}(x)$ is a primitive reciprocal polynomial, $k_j=d/d_j$, $j=0,1,\ldots,m$.
If $$\mathrm{mrp}^{d_0}(d_0)<\mathrm{mrp}^{d_1}(d_1)<\cdots<\mathrm{mrp}^{d_m}(d_m)$$ then
$\mathrm{mr}(d)=\mathrm{mrp}^{1/k_0}(d_0)$.
\end{corollary}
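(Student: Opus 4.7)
The plan is to establish both inequalities $\mathrm{mr}(d) \ge \mathrm{mrp}^{1/k_0}(d_0)$ and $\mathrm{mr}(d) \le \mathrm{mrp}^{1/k_0}(d_0)$, combining the Lemma with the structure theorem for non-primitive reciprocal irreducible polynomials.

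For the lower bound I would take any reciprocal algebraic integer $\alpha$ of degree $d$, not a root of unity, with minimal polynomial $P$, and choose $k$ to be the largest integer for which $P(x) = Q(x^k)$. Then $Q$ is primitive; reciprocity of $P$ transfers to $Q$ after substituting $y = x^k$, and irreducibility of $P$ forces $Q$ irreducible. Since $\alpha$ is not a root of unity, $\deg Q \ge 2$, and since a reciprocal irreducible polynomial of odd degree greater than one would have $-1$ as a root, $\deg Q$ is even. Hence $\deg Q = d_j$ and $k = k_j$ for some $j \in \{0,\ldots,m\}$, and equation (\ref{Unprim}) together with the definition of $\mathrm{mrp}$ yields $\house{\alpha} = \sqrt[k_j]{\house{Q}} \ge \mathrm{mrp}^{1/k_j}(d_j)$. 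Applying the Lemma under the hypothesized strict chain then gives $\mathrm{mrp}^{1/k_j}(d_j) \ge \mathrm{mrp}^{1/k_0}(d_0)$, with equality only when $j = 0$.

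For the upper bound the natural candidate is $R_{d_0}(x^{k_0})$: by (\ref{Unprim}) it is reciprocal of degree $d$ with house $\mathrm{mrp}^{1/k_0}(d_0)$, and its roots are not roots of unity because $\alpha_{d_0}$ is not. Once its irreducibility is established, $R_{d_0}(x^{k_0})$ is the minimal polynomial of a reciprocal algebraic integer of exact degree $d$, so $\mathrm{mr}(d) \le \mathrm{mrp}^{1/k_0}(d_0)$.

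The main obstacle is precisely this irreducibility claim, which I would extract from the strict inequalities in the hypothesis by contradiction. Suppose $P_1$ is a proper reciprocal irreducible factor of degree $d/t_1$ with $t_1 \ge 2$; then $\house{P_1} \le \mathrm{mrp}^{1/k_0}(d_0)$. Applying the lower-bound analysis to $P_1$ produces $\house{P_1} \ge \mathrm{mrp}^{1/k'}(d_p)$, where $d_p$ is the degree of the primitive reciprocal polynomial $Q_1$ with $P_1(x) = Q_1(x^{k'})$, so $k' d_p = d/t_1$. Raising both bounds to the $d$-th power and simplifying gives $\mathrm{mrp}(d_p)^{t_1 d_p} \le \mathrm{mrp}(d_0)^{d_0}$; since $\mathrm{mrp}(d_p) > 1$ and $t_1 \ge 1$, this entails $\mathrm{mrp}(d_p)^{d_p} \le \mathrm{mrp}(d_0)^{d_0}$, which by the strict chain forces $p = 0$. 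Substituting $d_p = d_0$ collapses the estimate to $\mathrm{mrp}(d_0)^{t_1 d_0} \le \mathrm{mrp}(d_0)^{d_0}$, and $\mathrm{mrp}(d_0) > 1$ then forces $t_1 \le 1$, contradicting $t_1 \ge 2$.
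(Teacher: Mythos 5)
Your lower bound is correct and is in fact more complete than what the paper writes: the paper's proof consists of invoking Lemma \ref{sec:PolProdRec} to compare the candidates $R_{d_j}(x^{k_j})$ among themselves and then asserting the conclusion, whereas you actually verify that \emph{every} reciprocal $\alpha$ of degree $d$ which is not a root of unity reduces to one of these candidates (via the primitive part $Q$ of its minimal polynomial, which you correctly check is irreducible, reciprocal and of even degree $d_j$), so that $\house{\alpha}\ge \mathrm{mrp}^{1/k_j}(d_j)\ge\mathrm{mrp}^{1/k_0}(d_0)$. This half is fine.

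The gap is in the upper bound, precisely at the irreducibility of $R_{d_0}(x^{k_0})$, which you rightly identify as the crux but do not close. Your contradiction argument assumes that a proper irreducible factor $P_1$ of $R_{d_0}(x^{k_0})$ is itself reciprocal and has degree of the form $d/t_1$. Neither is automatic. A reciprocal polynomial can factor into irreducibles that are not individually self-reciprocal but are merely paired with their reciprocals: for instance $x^2-3x+1$ is irreducible, reciprocal and non-cyclotomic, yet $x^4-3x^2+1=(x^2-x-1)(x^2+x-1)$, and each factor is (up to sign) the reciprocal of the \emph{other}, not of itself. A root $\beta$ of such a factor is not a reciprocal algebraic integer, so $\mathrm{mrp}$ says nothing about it, your inequality $\house{P_1}\ge\mathrm{mrp}^{1/k'}(d_p)$ is unavailable, and no contradiction with the hypothesised chain follows. (The degree of an irreducible factor is also only guaranteed to be a multiple of $d_0$, not a divisor of $d$.) In such a scenario the maximal-modulus root of $R_{d_0}(x^{k_0})$ could have degree less than $d$ or fail to be reciprocal, and the upper bound $\mathrm{mr}(d)\le\mathrm{mrp}^{1/k_0}(d_0)$ would not follow. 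To be fair, the paper's own two-line proof does not address this at all: it tacitly treats $R_{d_0}(x^{k_0})$ as the minimal polynomial of a degree-$d$ reciprocal integer (as Table 2 does, where this is checked computationally). So you are attempting to prove something the corollary silently presupposes, which is commendable, but the argument as given does not establish it, and the stated hypotheses alone do not appear to exclude the reducible case.
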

\begin{proof}
. Straightforwardly from Lemma \ref{sec:PolProdRec} it follows that polynomial $R_{d_0}(x^{k_0})$ has the house which is less than the house of any other polynomial of degree $d$.
Therefore 
\begin{eqnarray*}
\mathrm{mr}(d)&= & R_{d_0}(x^{k_0})\\
&=&\mathrm{mrp}^{1/k_0}(d_0).
\end{eqnarray*}
\qed
\end{proof}

The smallest limit point of the Mahler measure is believed to be $1.255433 \ldots$ \cite{peterborwein2002topics} which appears from the sequence of reciprocal heptanomials $(Q_n(x))_{n\ge 1}$,
$$Q_{n}(x)=x^{2n}+x^{2n-1}+x^{n+1}+x^n+x^{n-1}+x+1.$$
What is the smallest limit point of $\house{P_d(x)}^d$ where $P_d(x)$ is a reciprocal polynomial of degree $d$ arises as an interesting problem.
\begin{lemma}\label{sec:mddBoundedRec}
The sequence $(\mathrm{mr}^d(d))_{d\ge 1}$ is bounded. If $\alpha=3/2+\sqrt{5}/2=2.618\ldots$ then $\alpha^2=6.854\ldots$ is an upper bound.
\end{lemma}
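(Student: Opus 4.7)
The plan is to exhibit, for every even $d = 2k \geq 2$, a reciprocal algebraic integer of degree exactly $d$ whose house equals $\alpha^{1/k} = \alpha^{2/d}$; raising to the $d$-th power then gives the uniform bound $\alpha^2$. The sequence is vacuous at odd $d \geq 3$, since any reciprocal polynomial of odd degree has $-1$ as a root and therefore cannot be irreducible of degree $\geq 3$, so no reciprocal algebraic integer of odd degree $\geq 3$ exists besides roots of unity.

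The candidate polynomial comes from substituting $x \mapsto x^k$ into $P(x) = x^2 - 3x + 1$, with a sign twist dictated by the parity of $k$. Set
\[
R_d(x) = x^{2k} - 3 s_k x^k + 1, \qquad s_k := (-1)^{k-1},
\]
so that over $K := \mathbb{Q}(\alpha) = \mathbb{Q}(\sqrt{5})$ we have $R_d(x) = (x^k - s_k \alpha)(x^k - s_k/\alpha)$. This is a reciprocal integer polynomial of degree $d$ whose roots have modulus $\alpha^{1/k}$ or $\alpha^{-1/k}$; in particular $\house{R_d} = \alpha^{2/d}$.

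The crux is proving $R_d$ irreducible over $\mathbb{Q}$. The case $k = 1$ is immediate since $\alpha \notin \mathbb{Q}$. For $k \geq 2$ it suffices, by multiplying with the Galois conjugate, to show that $x^k - s_k \alpha$ is irreducible over $K$. By Capelli's theorem this amounts to verifying that $s_k \alpha$ is not a $p$-th power in $K$ for any prime $p \mid k$, and (when $4 \mid k$) that $s_k \alpha \neq -4 y^4$ for any $y \in K$. Writing $\alpha = \phi^2$ with $\phi = (1+\sqrt{5})/2$ the fundamental unit of $\mathcal{O}_K$, every unit of $K$ is $\pm \phi^n$, so $\pm \phi^2 = \gamma^p$ forces $\gamma$ to be a unit and $np = 2$, which has no integer solution for odd primes $p$. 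The sign $s_k$ is chosen precisely to dispose of $p = 2$: when $k$ is odd, $2 \nmid k$; when $k$ is even, $s_k \alpha = -\alpha < 0$ is not a square in the real field $K$. The subsidiary condition on $-4 y^4$ reduces to asking whether $\phi/2$ is a square in $K$, and the norm computation $N_{K/\mathbb{Q}}(1+\sqrt{5}) = -4 < 0$ shows it is not.

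With $R_d$ known irreducible, it is the minimal polynomial of each of its roots, and since $R_d$ is self-reciprocal, any such root is a reciprocal algebraic integer of degree exactly $d$. Choosing one of modulus $\alpha^{1/k}$ yields $\mathrm{mr}(d) \leq \alpha^{2/d}$, whence $\mathrm{mr}(d)^d \leq \alpha^2$, with equality attained at $d = 2$. The main obstacle is not a single hard step but the bookkeeping of the Capelli conditions across the parity of $k$ and divisibility by $4$; each branch reduces to a short calculation in the explicit unit structure of $\mathbb{Q}(\sqrt{5})$.
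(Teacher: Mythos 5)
Your proposal is correct and follows essentially the same route as the paper: both arguments exhibit, for each even $d=2k$, an explicit reciprocal trinomial of degree $d$ whose roots all have modulus $\alpha^{\pm 1/k}$, so that its house is $\alpha^{2/d}$ and the $d$-th power of the house is exactly $\alpha^2$. The difference is one of rigor. The paper's proof consists of the single line $\mathrm{mr}(d)=\house{\alpha_d}\le \house{x^d+3x^{d/2}+1}=\alpha^{2/d}$ and never checks that $x^d+3x^{d/2}+1$ is irreducible, i.e.\ that it really is the minimal polynomial of a reciprocal algebraic integer of degree exactly $d$; without that, the displayed inequality is not literally justified, since $\mathrm{mr}(d)$ is a minimum taken over algebraic integers of degree $d$. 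Your Capelli/unit-group argument in $\mathbb{Q}(\sqrt5)$ supplies precisely this missing verification, and all of its branches check out (the reduction to $x^k-s_k\alpha$ over $K$, the odd-prime case via $\pm\phi^{mp}=\pm\phi^2$, the sign trick for $p=2$, and the $-4y^4$ case via the negative norm of $\phi/2$). One small remark: the parity-dependent sign $s_k$ is an artifact of starting from $x^2-3x+1$, whose large root $\alpha=\phi^2$ is the square of the fundamental unit, so $x^k-\alpha$ genuinely splits for even $k$; the paper's uniform choice $x^{2k}+3x^k+1$, which is $(x^k+\alpha)(x^k+\alpha^{-1})$ over $\mathbb{Q}(\sqrt5)$, passes all the Capelli conditions for every $k\ge 1$ by the same computation, so a single sign would have sufficed. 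Your observation that the statement is vacuous for odd $d\ge 3$ is also correct and consistent with the paper's implicit restriction to even degrees.
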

\begin{proof}
. If $\mathrm{mr}(d)$ is attained for $\alpha_d$ then $$\mathrm{mr}(d)=\house{\alpha_d}\le \house{x^d+3x^{d/2}+1}= \sqrt[d/2]{2.618\ldots}.$$ The claim follows straightforwardly if we raise both sides of the inequality to the power $d$.
\qed
\end{proof}


It is much more difficult to prove that the sequence $(\mathrm{mrp}^d(d))_{d\ge 1}$ is bounded.
Let us introduce the following sequence of polynomials $(P_{2n}(x))_{n\ge 2}$ $$P_{2n}(x)=x^{2n}-x^{n+1}-x^n-x^{n-1}+1.$$ They are obviously reciprocal and primitive.
\begin{theorem}\label{sec:RealRoot}
There is a unique real root $\alpha_n \in (\sqrt[n]{2},\sqrt[n]{3})$ of $P_{2n}(x)$ such that the house of $P_{2n}(x)$ is equal to $\alpha_n$.
\end{theorem}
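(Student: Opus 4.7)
The plan is to prove existence of the real root $\alpha_n$ by the intermediate value theorem, uniqueness by a monotonicity argument using the substitution $u = x + 1/x$, and the house claim via a factorization of $P_{2n}$ that reduces it to a direct triangle-inequality estimate.

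For existence and uniqueness, dividing $P_{2n}(x) = 0$ by $x^n$ yields the equivalent equation $x^n + x^{-n} = x + 1 + x^{-1}$. At $x = \sqrt[n]{2}$ the left side equals $5/2$ while the right side is at least $3$ by AM--GM, so $P_{2n}(\sqrt[n]{2}) < 0$; at $x = \sqrt[n]{3}$ the left side is $10/3$ while the right side $\sqrt[n]{3} + 1 + \sqrt[n]{3}^{-1}$ is monotonically decreasing in $n$ and already equals $1 + 4/\sqrt{3} < 10/3$ at $n = 2$, giving $P_{2n}(\sqrt[n]{3}) > 0$. The intermediate value theorem then supplies $\alpha_n \in (\sqrt[n]{2}, \sqrt[n]{3})$. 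For uniqueness, I would use the substitution $u = x + 1/x$, which maps $(1,\infty)$ bijectively onto $(2,\infty)$; then $P_{2n}(x)/x^n = F(u) := p_n(u) - u - 1$, where $p_n$ is the Dickson polynomial defined by $p_n(v + 1/v) = v^n + v^{-n}$. One checks $F'(u) = nU_{n-1}(u/2) - 1 \ge n^2 - 1 > 0$ on $[2,\infty)$ (using $U_{n-1}(\cosh t) = \sinh(nt)/\sinh t \ge n$), so together with $F(2) = -1$ this forces a unique zero of $F$ in $(2,\infty)$, equivalently a unique real root of $P_{2n}$ in $(1,\infty)$. By continuity, $P_{2n}(r) > 0$ for every real $r > \alpha_n$.

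For the house claim, the pleasant observation is the factorization
\[
P_{2n}(x) = (x^{n+1} - 1)(x^{n-1} - 1) - x^n,
\]
obtained by direct expansion. For any $x$ with $|x| = r > \alpha_n$, the triangle inequality gives $|x^{n+1} - 1| \ge r^{n+1} - 1$ and $|x^{n-1} - 1| \ge r^{n-1} - 1$, both positive because $r > 1$; multiplying and using $(r^{n+1} - 1)(r^{n-1} - 1) = P_{2n}(r) + r^n > r^n$ (the strict inequality coming from the previous step) yields
\[
|(x^{n+1} - 1)(x^{n-1} - 1)| > r^n = |x^n|,
\]
so $P_{2n}(x) \ne 0$ whenever $|x| > \alpha_n$. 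Since $\alpha_n$ is itself a root, $\house{P_{2n}(x)} = \alpha_n$. The main obstacle is really just spotting the factorization; once it is in hand the argument is mechanical, and both triangle-inequality bounds become equalities at $x = \alpha_n$, which explains why the estimate is sharp exactly at the real root.
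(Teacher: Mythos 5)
Your proof is correct, but it takes a genuinely different and much shorter route than the paper's. Both proofs start identically, locating $\alpha_n\in(\sqrt[n]{2},\sqrt[n]{3})$ by the intermediate value theorem after dividing by $x^n$ (your endpoint estimates $5/2<3$ and $10/3>1+4/\sqrt{3}$, with monotonicity in $n$, check out). For the house claim the paper then passes to polar coordinates $x=r(\cos\varphi+i\sin\varphi)$, separates real and imaginary parts, shows that unimodular roots have argument in $(\pi/3,5\pi/3)$ while all other roots lie on an implicit curve $r=r(\varphi)$, and proves that this curve peaks at $\varphi=0$ through a lengthy sign analysis of $\partial F/\partial\varphi$ and $\partial F/\partial r$ involving discriminants and many binomial-coefficient inequalities (and, as written, some steps are only completed ``for $n$ large enough''). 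You instead exploit the identity $P_{2n}(x)=(x^{n+1}-1)(x^{n-1}-1)-x^n$, which expands correctly, and eliminate every root of modulus $r>\alpha_n$ by the reverse triangle inequality together with $(r^{n+1}-1)(r^{n-1}-1)=P_{2n}(r)+r^n>r^n$; the positivity of $P_{2n}(r)$ for $r>\alpha_n$, and the uniqueness of the real root in $(1,\infty)$, both follow from your substitution $u=x+1/x$ and the bound $F'(u)\ge n^2-1>0$ with $F(2)=-1$, all of which is sound. What the paper's approach buys is a complete geometric description of where all the conjugates lie (the unit circle plus the curve $r(\varphi)$ of its Figure 1), which your argument does not recover; what your approach buys is a short, fully elementary proof that works uniformly for every $n\ge 2$ and also settles uniqueness of the real root more cleanly than the paper does.
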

\begin{proof}
. There is a real root $\alpha_n \in (\sqrt[n]{2},\sqrt[n]{3})$ of $P_{2n}(x)$ because $P(\sqrt[n]{2})=2^2-2^{(n+1)/n}-2-2^{(n-1)/n}+1<1-2^{(n-1)/n}<0$ and
$P(\sqrt[n]{3})=3^2-3^{(n+1)/n}-3-3^{(n-1)/n}+1>2^2-2^{(n+1)/n}>0$.
If we divide the equation $P_{2n}(x)=0$ with $x^n$ we get
$$x^{n}-x-1-x^{-1}+x^{-n}=0.$$
Afterwards we use the trigonometric form of complex $x$, $x=r(\cos(\varphi)+i\sin(\varphi))$ and get
$$r^{n}(\cos n\varphi+i\sin n\varphi)-r(\cos \varphi+i\sin \varphi)-1-r^{-1}(\cos(-\varphi)+i\sin(-\varphi))+r^{-n}(\cos(-n\varphi)+i\sin(-n\varphi))=0$$
Separating real and imaginary part of this equation we conclude that every root has to satisfy the following system:
\begin{align}\label{intro:RealImag1}
\left(r^{n}+\frac{1}{r^n}\right)\cos(n\varphi)-\left(r+\frac{1}{r}\right)\cos(\varphi)-1&=0,\\
\left(r^{n}-\frac{1}{r^n}\right)\sin(n\varphi)-\left(r-\frac{1}{r}\right)\sin(\varphi)&=0.\label{intro:RealImag2}
\end{align}
If a root is unimodal i.e. $r=1$ then \eqref{intro:RealImag2} is clearly satisfied. Equation \eqref{intro:RealImag1} then gives
$2\cos(n\varphi)-2\cos(\varphi)-1=0$,
and afterwards
\begin{equation}\label{intro:Eqcos}
  \cos(n\varphi)=\cos(\varphi)+0.5.
\end{equation}
Since the left side of \eqref{intro:Eqcos} $\cos(n\varphi)\in [-1,1]$ it follows that on the right side $\cos\varphi \in [-1,0.5])$ $\Leftrightarrow$ $\varphi \in (\pi/3,5\pi/3)$. If $\varphi \in (-\pi/3,\pi/3)$ then there are no unimodal roots because the left side of \eqref{intro:Eqcos} can not be equal to the right side. If $r>1$ we can solve \eqref{intro:RealImag1} for $\cos(n\varphi)$ and \eqref{intro:RealImag2} for $\sin(n\varphi)$ in terms of $\varphi$ and $r$. Finally we get an implicit equation of a plane curve $r=r(\varphi)$ which contains all roots which are not unimodal when $\varphi \in (-\pi/3,\pi/3)$:
\begin{equation}\label{intro:Impl}
\left[\frac{\left(r-\frac{1}{r}\right)\sin(\varphi)}{r^{n}-\frac{1}{r^n}}\right]^2+\left[\frac{\left(r+\frac{1}{r}\right)\cos(\varphi)+1}{r^{n}+\frac{1}{r^n}}\right]^2=1.
\end{equation}

\begin{figure}[t]
  \includegraphics[width=0.7\textwidth]{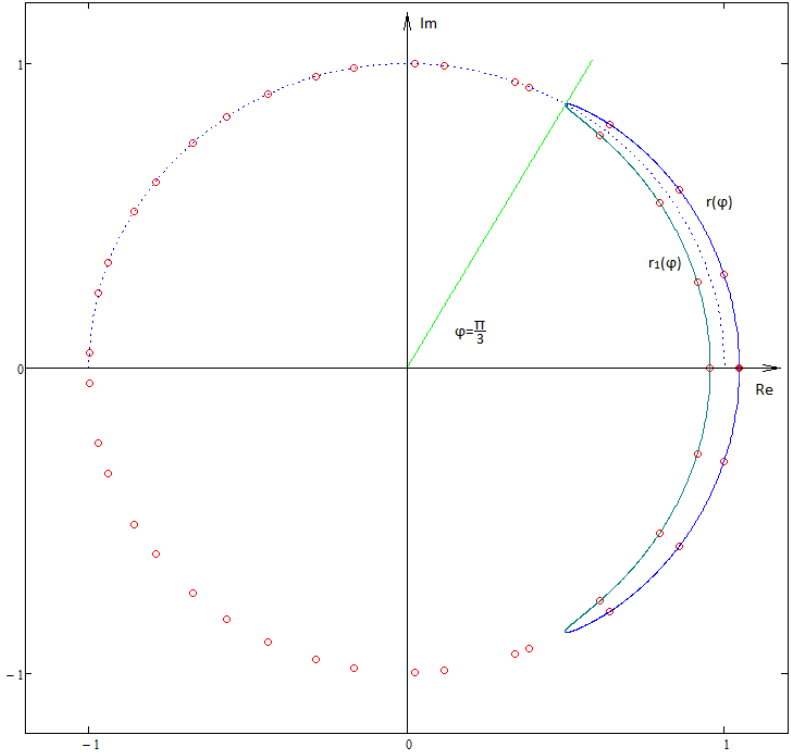}
\caption{Roots of the reciprocal polynomial $P_{42}(x)=x^{42}-x^{22}-x^{21}-x^{20}+1$ are represented with $\circ$. If modulus of a root is equal to one then its argument $\in (\pi/3,5\pi/3)$, else the root is lying on the graph of $r(\varphi)$, the solution of the polar 
equation \eqref{intro:Impl}. The root having maximum modulus is the real root denoted with $\bullet$. }
\label{fig:1}       
\end{figure}

It is obvious that if $r(\varphi)$ satisfies \eqref{intro:Impl} then $r_1(\varphi)=1/r(\varphi)$ also satisfies \eqref{intro:Impl}. We can calculate $\theta$ such that $r(\theta)=1$. If we use that
$r^n-\frac{1}{r^n}=\left(r-\frac{1}{r}\right)(r^{n-1}+r^{n-3}+\cdots+r^{-(n-1)}$ and replace $r=1$ in \eqref{intro:Impl} we obtain a quadratic equation
$$\frac{1-\cos^2(\theta)}{n^2}+\frac{(2\cos(\theta)+1)^2}{4}=1.$$
and solve it for $\cos(\theta)$.
The solution
$\cos(\theta)=\frac{\sqrt{4n^4-7n^2+4}-n^2}{2n^2-2}\in(0,\frac{1}{2})$ and tends to $0.5$ when $n$ tends to infinity. It follows that $\pi/3<\theta<\pi/2$ and $r(\varphi)>1$ on $(-\theta,\theta)\supset (-\pi/3,\pi/3)$ (see Figure \ref{fig:1}).
The claim will be proved if we show that $r(\varphi)$ increase on $(-\pi/3,0)$ and decrease on $(0,\pi/3)$.
If we denote the left side of \eqref{intro:Impl} by $F(r(\varphi),\varphi)$ then the implicit equation becomes $F(r(\varphi),\varphi)=1$. It is well known that the first derivative of $r(\varphi)$ is $$r'(\varphi)=-\frac{\frac{\partial F}{\partial \varphi}}{\frac{\partial F}{\partial r}}.$$
Since
\begin{equation}\label{intro:Impl1}
\frac{\partial F}{\partial \varphi}=\frac{2\sin(\varphi)\cos(\varphi)(r-\frac{1}{r})}{r^{n}-\frac{1}{r^n}}-\frac{2\sin (\varphi) (r+\frac{1}{r})( (r+\frac{1}{r})\cos(\varphi)+1)}{(r^{n}+\frac{1}{r^n})^2}
\end{equation}
can be simplified
$$-\frac{ r^{4n}-2r^{2n}+1+r^{4n+2}-2r^{2n+2}+r^2+4r^{4n+1}\cos\varphi-4r^{2n+3}\cos\varphi-4r^{2n-1}\cos\varphi+4r\cos\varphi }{(2r^{2n-1}\sin\varphi)^{-1}(r^{2n}-1)^2(r^{2n}+1)^2}$$
and factored into
$$-\frac{2r^{2n-1}\sin\varphi \left((r^{2n}-1)^2(r^2+1)+4r\cos\varphi(r^{2n+2}-1)(r^{2n-2}-1) \right) }{(r^{2n}-1)^2(r^{2n}+1)^2}$$
it follows that
\begin{equation}\label{FoverfiNeg}
\varphi \in (-\pi/3,0) \Rightarrow \frac{\partial F}{\partial \varphi}>0,\;\;\; \varphi \in (0,\pi/3,0) \Rightarrow \frac{\partial F}{\partial \varphi}<0.
\end{equation}
so that the claim will be proved if we show that $-\frac{\partial F}{\partial r}>0$ on $(-\pi/3,\pi/3)$.
We derive that
$$\frac{\partial F}{\partial r}=\frac{2\sin^2 \varphi \left(nr^{n-1}+\frac{n}{r^{n+1}}\right)\left(r-\frac{1}{r}\right)^2}{\left(\frac{1}{r^n }-r^n\right)^3}-\frac{2\left[\cos \varphi \left(r+\frac{1}{r}\right)+1\right]^2\left(nr^{n-1}-\frac{n}{r^{n+1}}\right) }{\left(\frac{1}{r^n }+r^n\right)^3}+$$
$$+\frac{2\sin^2 \varphi \left(r-\frac{1}{r}\right)\left(\frac{1}{r^2}+1\right)}{\left(\frac{1}{r^n }-r^n\right)^2}-\frac{2 \cos \varphi \left[\cos \varphi \left(r+\frac{1}{r}\right)+1\right]\left(\frac{1}{r^{2}}-1\right) }{\left(\frac{1}{r^n }+r^n\right)^2}.$$
can be expanded
$$\frac{\partial F}{\partial r}=-\frac{2r^{2n-3}\left(4r^2A(r)\cos^2\varphi+B(r)\cos\varphi+C(r)\right)}{\left(r^{4n}-1\right)^3}$$
where 
\begin{align*}
B(r)= &(2n-1)r^{8n+3}+(2n+1)r^{8n+1}-(8n-2)r^{6n+3}-(8n+2)r^{6n+1}+12nr^{4n+3}+\\
& +12nr^{4n+1}-(8n+2)r^{2n+3}-(8n-2)r^{2n+1}+(2n+1)r^3+(2n-1)r\\
= &r(r^{2n}-1)^3((2n-1)(r^{2n+2}-1)+(2n+1)r^2(r^{2n-2}-1)) 
\end{align*}
so that we can conclude that $B(r)$ is greater than $0$ when $r>1$.
We intend to prove that also
$$A(r)=nr^{8n}-(2n-1)r^{6n+2}-(2n+1)r^{6n-2}+6nr^{4n}-(2n+1)r^{2n+2}-(2n-1)r^{2n-2}+n$$ and
$$C(r)=(n-1)r^{8n+4}-nr^{8n+2}+(n+1)r^{8n}+(4n-2)r^{6n+4}-12nr^{6n+2}+(4n+2)r^{6n}+$$
$$+6nr^{4n+4}-6nr^{4n+2}+6nr^{4n}+(4n+2)r^{2n+4}-12nr^{2n+2}+(4n-2)r^{2n}+(n+1)r^{4}-nr^{2}+n-1 $$
are greater than $0$ when $r>1$. If we substitute $r^2$ in $A(r)$ with $t+1$ then we have to prove that
$$D_{4n}(t)=n(t+1)^{4n}-(2n-1)(t+1)^{3n+1}-(2n+1)(t+1)^{3n-1}+$$
$$+6n(t+1)^{2n}-(2n+1)(t+1)^{n+1}-(2n-1)(t+1)^{n-1}+n>0$$ when $t>0$.
If we develop $D_{4n}(t)$ and denote its coefficients by $a_k$ i.e. $D_{4n}(t)=a_{4n}t^{4n}+a_{4n-1}t^{4n-1}+\cdots+a_1t+a_0$ then it is sufficiently to prove that 
$a_k=\underline{n\binom{4n}{k}-(2n-1)\binom{3n+1}{k}-(2n+1)\binom{3n-1}{k}}+(6n)\binom{2n}{k}-(2n+1)\binom{n+1}{k}-(2n-1)\binom{n-1}{k}+n\binom{0}{k}$
are $\geq 0$ where we use the convention that
$$\binom{n}{k}=\frac{n(n-1)\cdots(n-k+1)}{k!},\;\;0<k\le n,\;\; \binom{n}{0}=1,\;\; \binom{0}{0}=1,\;\; \binom{n}{k}=0,\;\;k>n.$$
We can verify that $a_k=0,\;\;k=0,1,2,3$ and that $a_4=n(n-1)(n+1)(3n^2-1)/3>0$. Let us denote underlined part of $a_k$ with $b_k$ and the remainder with $c_k$. For $k\geq 5$ it is sufficient to prove that $b_k>0$ and $c_k>0$. If $n\geq k>3n+1$ then, using the convention, $b_k=n\binom{4n}{k}$ so that $b_k>0$. If $k\leq 3n-1$ then we prove that $b_k>0$, using the principle of mathematical induction. For $k=5$ and $n\geq 2$ we have
\begin{align*}
b_5 &=n\binom{4n}{5}-(2n-1)\binom{3n+1}{5}-(2n+1)\binom{3n-1}{5}\\
&=(n-1)(26n^5+366n^4-539n^3-39n^2+240n-60)/60\\
&>0.
\end{align*}
Assume that the inequality is true for $k=m$, $m<3n-1$ and $m\ge 5$ and let us prove that it is true for $k=m+1$. For $k=m$ we have $0<b_m$ i.e.
$$0<n\binom{4n}{m}-(2n-1)\binom{3n+1}{m}-(2n+1)\binom{3n-1}{m}.$$
If we multiply both sides of this inequality by $(3n+1-m)/(m+1)>0$ then we have
\begin{align*}
0&<n\binom{4n}{m}\frac{3n+1-m}{m+1}-(2n-1)\binom{3n+1}{m}\frac{3n+1-m}{m+1}-(2n+1)\binom{3n-1}{m}\frac{3n+1-m}{m+1}\\
&<n\binom{4n}{m}\frac{4n-m}{m+1}-(2n-1)\binom{3n+1}{m}\frac{3n+1-m}{m+1}-(2n+1)\binom{3n-1}{m}\frac{3n-1-m}{m+1}\\
&=n\binom{4n}{m+1}-(2n-1)\binom{3n+1}{m+1}-(2n+1)\binom{3n-1}{m+1}\\
&=b_{m+1},
\end{align*}
which coincides with the rewritten inequality for $k = m + 1$.
It remains to be proved that $b_{3n}>0$ and $b_{3n+1}>0$. We have just proved that $$0<b_{3n-1}=n\binom{4n}{3n-1}-(2n-1)\binom{3n+1}{3n-1}-(2n+1)\binom{3n-1}{3n-1}$$ so that it follows
$$0<n\binom{4n}{3n-1}-(2n-1)\binom{3n+1}{3n-1}.$$
If we multiply both sides of this inequality by $2/(3n)>0$ then we have
\begin{align*}
0&<n\binom{4n}{3n-1}\frac{2}{3n}-(2n-1)\binom{3n+1}{3n-1}\frac{2}{3n}\\
&<n\binom{4n}{3n-1}\frac{n+1}{3n}-(2n-1)\binom{3n+1}{3n-1}\frac{2}{3n}\\
&=n\binom{4n}{3n}-(2n-1)\binom{3n+1}{3n}=b_{3n}.
\end{align*}
Similarly, if we multiply both sides of the inequality $0<b_{3n}$  by $1/(3n+1)>0$ then we have
\begin{align*}
0&<n\binom{4n}{3n}\frac{1}{3n+1}-(2n-1)\binom{3n+1}{3n}\frac{1}{3n+1}\\
&<n\binom{4n}{3n}\frac{n}{3n+1}-(2n-1)\binom{3n+1}{3n}\frac{1}{3n+1}\\
&=b_{3n+1}.
\end{align*}
Finally it remains to be proved that $c_k>0$. Since
$$c_k=(3n)\binom{2n}{k}-(2n+1)\binom{n+1}{k}+(3n)\binom{2n}{k}-(2n-1)\binom{n-1}{k}+n\binom{0}{k},$$ and it is obviously that
$$(3n)\binom{2n}{k}>(2n+1)\binom{n+1}{k},\;\;\;(3n)\binom{2n}{k}>(2n-1)\binom{n-1}{k}$$ the claim follows.

If we substitute $r^2$ in $C(r)$ with $t+1$ then we have to prove that
$$E_{4n}(t)=(n-1)(t+1)^{4n+2}-n(t+1)^{4n+1}+(n+1)(t+1)^{4n}+(4n-2)(t+1)^{3n+2}-12n(t+1)^{3n+1}+$$
$$+(4n+2)(t+1)^{3n}+6n(t+1)^{2n+2}-6n(t+1)^{2n+1}+6n(t+1)^{2n}+(4n+2)(t+1)^{n+2}-12n(t+1)^{n+1}+$$
$$+(4n-2)(t+1)^{n}+(n+1)(t+1)^{2}-n(t+1)+n-1>0 $$  when $t>0$.
If we develop $E_{4n}(t)$ and denote its coefficients by $d_k$ i.e. $E_{4n}(t)=d_{4n}t^{4n}+d_{4n-1}t^{4n-1}+\cdots+d_1t+d_0$ then it is sufficiently to prove that $d_k=e_k+f_k\geq 0$ where
$$e_{k}=(n-1)\binom{4n+2}{k}-n\binom{4n+1}{k}+(n+1)\binom{4n}{k}+(4n-2)\binom{3n+2}{k}-12n\binom{3n+1}{k}+(4n+2)\binom{3n}{k}$$
$$f_k=6n\binom{2n+2}{k}-6n\binom{2n+1}{k}+6n\binom{2n}{k}+(4n+2)\binom{n+2}{k}-12n\binom{n+1}{k}+$$
$$+(4n-2)\binom{n}{k}+(n+1)\binom{2}{k}-n\binom{1}{k}+(n-1)\binom{0}{k}\geq 0 $$
We can verify that $d_k=0,\;\;k=0,1,2,3$ and that $d_4=n(2n-1)(n+1)(5n^3+n^2+12n+12)/12>0$.
Let us prove that $e_{k}>0$, $3n \geq k\geq 5$. Since
\begin{align}\label{intro:ek}
e_k & =G_1[(n-1)(4n+2)(4n+1)-n(4n+1)(4n-k+2)+(n+1)(4n-k+2)(4n-k+1)]+\\
& + G_2[(4n-2)(3n+2)(3n+1)-12n(3n+1)(3n-k+2)+(4n+2)(3n-k+2)(3n-k+1)]
\end{align}
where $G_1=\frac{4n(4n-1)\cdots (4n-k+3)}{k!}$ and
$G_2=\frac{3n(3n-1)\cdots (3n-k+3)}{k!}$
so that if we divide both sides of the inequality $e_k>0$ with $G_1$ we get the equivalent inequality
$$(n-1)(4n+2)(4n+1)-n(4n+1)(4n-k+2)+(n+1)(4n-k+2)(4n-k+1)+$$
$$+M[(4n-2)(3n+2)(3n+1)-12n(3n+1)(3n-k+2)+(4n+2)(3n-k+2)(3n-k+1)]>0$$ where $M=\frac{3n(3n-1)\cdots (3n-k+3)}{4n(4n-1)\cdots (4n-k+3)}$.
The left side of the inequality is quadratic in $k$ thus we reorder it and then we have
$$(4Mn+n+2M+1)k^2+(12Mn^2-4n^2-12Mn-10n-6M-3)k+$$
$$-36Mn^3+16n^3-36Mn^2+12n^2-8Mn+2n>0$$
so that it is fulfilled if its discriminant $\Delta_1<0$ where
$$\Delta_1=(720n^4+576n^3+416n^2+208n+36)M^2+(-208n^4-176n^3+264n^2+208n+36)M+$$
$$-48n^4-32n^3+68n^2+52n+9.$$
Since $\Delta_1$ is quadratic in $M$ it follows that inequality $\Delta_1<0$ is fulfilled if $M$ is between roots $M_1$, $M_2$ of $\Delta_1$ where
$$M_{1,2}=
\frac{104n^4+88n^3-132n^2-104n-18}{720n^4+576n^3+416n^2+208n+36}+$$
$$+\frac{\pm 4\sqrt{2}n\sqrt{1418n^6+2156n^5-946n^4-3068n^3-1905n^2-485n-45}}{720n^4+576n^3+416n^2+208n+36}$$
tend to $$\frac{104\pm 4\sqrt{2\cdot 1418}}{720}=-0.151,\;0.440$$ when $n$ tends to $\infty$. Since $k\geq 5$ and $\frac{3n-j}{4n-j}<\frac{3}{4}$, $j>0$ it follows $0<M<\frac{3^3}{4^3}=0.422$ so that M is really between $M_1$ and $M_2$ when $n$ is large enough. If we replace $k=3n+1$ in $e_k$ and in $G_1$ we get
$$e_{3n+1}=G_1(13n^3-7n^2-10n-2)+2(6n^2-5n-2)$$ which is $>0$ when $n\geq 2$. Since $(n-1)(4n+2)(4n+1)-n(4n+1)(4n-k+2)>0$ is equivalent with $k>4+2/n$, which is obviously true for $n\geq 3$, it follows that $e_k>0$ is also fulfilled for $k=3n+2,3n+3,\ldots,4n$. Finally we completed the proof that $e_k>0$ for $k\geq 5$ and $n$ large enough.

It remains to be proved that $f_k>0$ when $4n\geq k\geq 5$.
$$f_k=6n\binom{2n+2}{k}-6n\binom{2n+1}{k}+6n\binom{2n}{k}+(4n+2)\binom{n+2}{k}-12n\binom{n+1}{k}+$$
$$+(4n-2)\binom{n}{k}+(n+1)\binom{2}{k}-n\binom{1}{k}+(n-1)\binom{0}{k}\geq 0 $$
Let us prove that $f_{k}>0$, $n \geq k\geq 5$. Since
\begin{align}\label{intro:fk}
f_k & =H_1[6n(2n+2)(2n+1)-6n(2n+1)(2n-k+2)+6n(2n-k+2)(2n-k+1)]+\\
& + H_2[(4n+2)(n+2)(n+1)-12n(n+1)(n-k+2)+(4n-2)(n-k+2)(n-k+1)]
\end{align}
where $H_1=\frac{2n(2n-1)\cdots (2n-k+3)}{k!}$ and
$H_2=\frac{n(n-1)\cdots (n-k+3)}{k!}$
so that if we divide both sides of the inequality $f_k>0$ with $H_1$ we get the equivalent inequality
$$6n(2n+2)(2n+1)-6n(2n+1)(2n-k+2)+6n(2n-k+2)(2n-k+1)+$$
$$+N[(4n+2)(n+2)(n+1)-12n(n+1)(n-k+2)+(4n-2)(n-k+2)(n-k+1)]>0$$
where $N=\frac{n(n-1)\cdots (n-k+3)}{2n(2n-1)\cdots (2n-k+3)}$
The left side of the inequality is quadratic in $k$ thus we reorder it and then we have
$$(4Nn+6n-2N)k^2+(4Nn^2-12n^2+4Nn-12n+6N)k+$$
$$-4Nn^3+24n^3+36n^2-12Nn^2-8Nn+12n.$$
so that it is fulfilled if its discriminant $\Delta_2<0$ where
$$\Delta_2=(20n^4+48n^3+24n-4n+9)N^2+(12n^2-72n^3-96n^4-12n)N-(108n^4+144n^3+36n^2)$$
Since $\Delta_2$ is quadratic in $N$ it follows that inequality $\Delta_2<0$ is fulfilled if $N$ is between roots $N_1$, $N_2$ of $\Delta_2$ where
$$N_{1,2}=\frac{48n^4+36n^3-6n^2+6n\pm 6\sqrt{2}n\sqrt{(n+1)(62n^5+98n^4+54n^3+14n^2+10n+5)}}{20n^4+48n^3+24n^2-4n+9}$$
tend to $$\frac{48\pm 6\sqrt{2\cdot 62}}{20}=-0.941,\;5.741$$ when $n$ tends to $\infty$. Since $k\geq 5$ and $\frac{n-j}{2n-j}<\frac{1}{2}$, $j>0$ it follows $0<M<\frac{1}{2^3}=0.125$ so that M is really between $M_1$ and $M_2$ when $n$ is large enough. If we replace $k=n+1$ in $f_k$ and in $H_1$ we get
$$f_{n+1}=H_1(18n^3+24n^2+6n)+2(2n^2-n+2)$$ which is $>0$ when $n\geq 2$. Since $6n\binom{2n+2}{k}-6n\binom{2n+1}{k}=6n\binom{2n+1}{k-1}>0$ for $k>0$, it follows that $f_k>0$ is also fulfilled for $k=n+2,n+3,\ldots,2n+1$. Finally we completed the proof that $d_k=e_k+f_k>0$ for $k$ such that $4n+2 \geq k\geq 4$ and $n$ large enough.

\qed
\end{proof}
\begin{theorem}\label{sec:Limit}
The house of $P_{2n}(x)=x^{2n}-x^{n+1}-x^n-x^{n-1}+1$ raised to $2n$-th power
tends to $\alpha^2=6.854\ldots$ where $$\alpha=\frac{3+\sqrt{5}}{2}=2.618\ldots$$
is the greatest root of $x^2-3x+1$.
\end{theorem}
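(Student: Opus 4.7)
The plan is to exploit Theorem~\ref{sec:RealRoot}, which identifies the house of $P_{2n}(x)$ with its unique real root $\alpha_n\in(\sqrt[n]{2},\sqrt[n]{3})$. Writing $\alpha_n^{2n}=(\alpha_n^n)^2$, it suffices to prove that $\alpha_n^n\to\alpha=(3+\sqrt{5})/2$; squaring then yields the stated limit $\alpha^2=(7+3\sqrt{5})/2=6.854\ldots$.

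First I would divide the defining relation $P_{2n}(\alpha_n)=0$ through by $\alpha_n^n$ to obtain the symmetric form
\[
\alpha_n^n+\alpha_n^{-n}\;=\;\alpha_n+\alpha_n^{-1}+1.
\]
Setting $y_n:=\alpha_n^n$, the enclosure $\alpha_n\in(\sqrt[n]{2},\sqrt[n]{3})$ gives $y_n\in(2,3)$, so the sequence $(y_n)$ is bounded. The same enclosure forces $1<\alpha_n<\sqrt[n]{3}\to 1$, hence $\alpha_n\to 1$ and therefore $\alpha_n+\alpha_n^{-1}+1\to 3$ as $n\to\infty$.

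Passing to the limit in the displayed identity, every subsequential limit $y$ of $(y_n)$ lies in $[2,3]$ and satisfies $y+y^{-1}=3$, i.e.\ $y^{2}-3y+1=0$. The two roots of this polynomial are $(3\pm\sqrt{5})/2$, and only $\alpha=(3+\sqrt{5})/2\approx 2.618$ belongs to $[2,3]$. Hence every accumulation point of $(y_n)$ equals $\alpha$, and because $(y_n)$ is bounded this upgrades to convergence of the whole sequence: $y_n\to\alpha$, so $\alpha_n^{2n}=y_n^{2}\to\alpha^{2}$.

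There is essentially no serious obstacle here beyond the preceding theorem: the two-sided bound on $\alpha_n$ simultaneously keeps $y_n$ bounded away from $\{0,1\}$ (so the limit equation is nondegenerate) and forces $\alpha_n\to 1$ (so the right-hand side of the reciprocal equation has a clean limit). The only point requiring any care is the upgrade from subsequential convergence to convergence of $(y_n)$, which is immediate from the uniqueness of the admissible root in $[2,3]$.
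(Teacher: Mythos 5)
Your proof is correct, but it takes a genuinely different route from the paper's. You divide $P_{2n}(\alpha_n)=0$ by $\alpha_n^n$ to get the symmetric form $\alpha_n^n+\alpha_n^{-n}=\alpha_n+\alpha_n^{-1}+1$, use the enclosure $\alpha_n\in(\sqrt[n]{2},\sqrt[n]{3})$ from Theorem~\ref{sec:RealRoot} both to trap $y_n=\alpha_n^n$ in $[2,3]$ and to force $\alpha_n\to1$, and then identify the unique admissible root of the limit equation $y^2-3y+1=0$ by a compactness (subsequential-limit) argument. The paper instead compares $P_{2n}$ with the auxiliary polynomial $Q_{2n}(x)=x^{2n}-3x^n+1$, whose relevant root is exactly $\sqrt[n]{\alpha}$: an AM--GM step gives $Q_{2n}(\alpha_n)>0$, hence $\sqrt[n]{\alpha}<\alpha_n$, while a separate induction (Lemma~\ref{IneqMatInd}) gives $Q_{2n}(\alpha_{n+1})<0$, hence $\alpha_n<\sqrt[n-1]{\alpha}$; raising the two-sided bound to the $2n$-th power yields $\alpha^2<\alpha_n^{2n}<\alpha^{2n/(n-1)}$ and the squeeze theorem finishes. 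Your argument is shorter and entirely bypasses the auxiliary lemma and the monotonicity analysis of $Q_{2n}$, at the price of being purely qualitative; the paper's sandwich costs more work but delivers the extra information that $\alpha_n^{2n}$ always exceeds $\alpha^2$ (so the limit point is approached from above) together with an explicit $O(1/n)$ rate of convergence. The one step in your write-up that should be spelled out in a final version is the standard upgrade from ``every subsequential limit of the bounded sequence $(y_n)$ equals $\alpha$'' to convergence of the whole sequence, which you correctly flag as the only delicate point.
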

\begin{lemma}\label{IneqMatInd} For a real number $a>1$ and a natural number $n\ge 0$ the inequality
$$-2a^{2n+2}+a^2<-a^{n+1}-a^n+1 $$ is valid.
\end{lemma}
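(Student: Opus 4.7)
The plan is to prove the inequality by induction on the non-negative integer $n$. It is convenient to rewrite the claim in the equivalent positive form
$$f(a,n) := 2a^{2n+2} - a^{n+1} - a^n - a^2 + 1 > 0,$$
and to show $f(a,n) > 0$ for every $a>1$ and every $n\ge 0$. The label of the lemma (\texttt{IneqMatInd}) already suggests this is the intended route.

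For the base case $n=0$ one computes $f(a,0) = 2a^2 - a - 1 - a^2 + 1 = a(a-1)$, which is strictly positive exactly because $a>1$. For the inductive step, suppose that $f(a,n)>0$, i.e.
$$2a^{2n+2} > a^{n+1} + a^n + a^2 - 1.$$
Multiplying both sides by $a^2>0$ gives
$$2a^{2n+4} > a^{n+3} + a^{n+2} + a^4 - a^2.$$
To conclude $f(a,n+1)>0$, that is $2a^{2n+4} > a^{n+2} + a^{n+1} + a^2 - 1$, it suffices to show the intermediate inequality
$$a^{n+3} + a^{n+2} + a^4 - a^2 \;\ge\; a^{n+2} + a^{n+1} + a^2 - 1.$$
After cancellation this rearranges to
$$a^{n+1}(a^2-1) + (a^2-1)^2 \;=\; (a^2-1)\bigl(a^{n+1} + a^2 - 1\bigr) \ge 0,$$
which is plainly true (and strict) for $a>1$, since both factors are positive.

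There is essentially no serious obstacle here. The only mild piece of ingenuity is to multiply the induction hypothesis by $a^2$ — so that the leading term on the left advances from $2a^{2n+2}$ to $2a^{2n+4}$ — and then to notice that the resulting slack factors as $(a^2-1)(a^{n+1}+a^2-1)$, whose positivity for $a>1$ is immediate. Both steps are elementary, and the whole lemma fits into a short inductive paragraph of the kind sketched above.
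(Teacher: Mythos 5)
Your proof is correct and follows essentially the same route as the paper: induction on $n$, with the inductive step obtained by multiplying the hypothesis by a power of $a$ and absorbing the resulting slack. The only cosmetic difference is that you multiply by $a^2$ (landing exactly on the exponent $2n+4$ and factoring the slack as $(a^2-1)(a^{n+1}+a^2-1)$), whereas the paper multiplies by $a$ and uses $a^2-1<a(a^2-1)$; both arguments are valid.
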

\begin{proof}. We rewrite the inequality as $$a^2-1<2a^{2n+2}-a^{n+1}-a^n $$ and prove it using the principle of mathematical induction. For $n=0$ we have $a^2-1<2a^{2}-a-1 $ which is true.
Assume that the inequality is true for $n=k$, $k\ge 0$ and let us prove that it is true for $n=k+1$. For $n=k$ we have $$a^2-1<2a^{2k+2}-a^{k+1}-a^k .$$ Let us multiply both sides of this inequality by $a$. Then we obtain $$a(a^2-1)<2a^{2k+3}-a^{k+2}-a^{k+1}.$$ Since $a^2-1<a(a^2-1)$ we deduce the inequality $$a^2-1<2a^{2k+3}-a^{k+2}-a^{k+1} $$ which coincides with the rewritten inequality for $n=k+1$. \qed
\end{proof}
\textit{Proof of Theorem} \ref{sec:Limit} Using the Theorem \ref{sec:RealRoot} it is suficient to prove that the sequence $\left(\alpha_n^{2n}\right)_{n\ge 1}$ tends to $\alpha^2$.
Let we denote $Q_{2n}(x)=x^{2n}-3x^n+1$. Using the inequality of arithmetic and geometric means we have
\begin{align*}
Q_{2n}(\sqrt[n]{\alpha}) & = \\
& = 0\\
& = P_{2n}(\alpha_n)\\
& = \alpha_n^{2n}-\alpha_n^{n+1}-\alpha_n^n-\alpha_n^{n-1}+1\\
& <  \alpha_n^{2n}-\alpha_n^n-2\sqrt{\alpha_n^{n+1}\alpha_n^{n-1}}+1\\
& =  \alpha_n^{2n}-3\alpha_n^n+1\\
& = Q_{2n}(\alpha_n).
\end{align*}
If $x\in [\sqrt[n]{\alpha},+\infty)$ then $Q'_{2n}(x)=2nx^{2n-1}-3nx^{n-1}=nx^{n-1}(2x^n-3)\ge  n\alpha^{(n-1)/n}(2\cdot 2.6-3)>0$ so that $Q_{2n}(x)$ increase. Hence from $Q_{2n}(\sqrt[n]{\alpha})<Q_{2n}(\alpha_n)$ we deduce that
\begin{equation}\label{LPolice}
  \sqrt[n]{\alpha}<\alpha_{n}.
\end{equation}
On the other hand
\begin{align*}
Q_{2n}(\alpha_{n+1}) & = \\
& = \alpha_{n+1}^{2n}-3\alpha_{n+1}^n+1\\
& < \alpha_{n+1}^{2}(\alpha_{n+1}^{2n}-3\alpha_{n+1}^n+1)\\
& = \alpha_{n+1}^{2n+2}-\alpha_{n+1}^{n+2}-2\alpha_{n+1}^{n+2}+\alpha_{n+1}^{2}\;\;(\text{by Lemma } \ref{IneqMatInd})\\
& <  \alpha_{n+1}^{2n+2}-\alpha_{n+1}^{n+2}-\alpha_{n+1}^{n+1}-\alpha_{n+1}^{n}+1\\
& =  0.\\
\end{align*}
Since $Q_{2n}(x)$ increase on $[\sqrt[n]{\alpha},+\infty)$ it follows that it is positive on $(\sqrt[n]{\alpha},+\infty)$. Thus from $Q_{2n}(\alpha_{n+1})<0$ we deduce that $\alpha_{n+1}<\sqrt[n]{\alpha}$.
Clearly then
\begin{equation}\label{DPolice}
  \alpha_{n}<\sqrt[n-1]{\alpha}
\end{equation}
is valid.
Raising both sides of \eqref{LPolice} and of \eqref{DPolice} to the power $2n$ we have
$$\alpha^2<\alpha_{n}^{2n}<{\alpha}^{\frac{2n}{n-1}}.$$ Finaly the claim follows by the squeeze (two policemen) theorem .
\qed

\begin{theorem}\label{sec:HexToOct1}
If $P(x)$ is an antireciprocal, primitive hexanomial of degree $d>7$ such that six of its non-zero coefficients are $1$ or $-1$ then there is a natural number $p<d$ such that $P(x)(x^p-1)$ is a reciprocal octanomial (having the house equal to the house of $P(x)$), such that eight of its non-zero coefficients are also $1$ or $-1$.
\end{theorem}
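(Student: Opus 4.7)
The plan is to exploit that $x^p-1$ is itself antireciprocal of degree $p$, so the product of two antireciprocal polynomials $P(x)(x^p-1)$ is reciprocal of degree $d+p$. The new roots introduced are $p$-th roots of unity, all on the unit circle, so $\house{P(x)(x^p-1)}=\house{P(x)}$. Hence the substance of the theorem is to produce $p<d$ so that the product has exactly eight nonzero coefficients, each $\pm 1$.

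Write $P(x)=\sum_{k=1}^{6}\epsilon_k x^{n_k}$ with $0=n_1<n_2<n_3<n_4<n_5<n_6=d$ (antireciprocality forces $n_1=0$), together with $n_i+n_{7-i}=d$ and $\epsilon_{7-i}=-\epsilon_i$. Expanding
\[P(x)(x^p-1)=\sum_{k=1}^{6}\epsilon_k x^{n_k+p}-\sum_{k=1}^{6}\epsilon_k x^{n_k},\]
each overlap $n_k+p=n_{k'}$ produces either a cancellation (if $\epsilon_k=\epsilon_{k'}$) or a coefficient of absolute value $2$ (if $\epsilon_k=-\epsilon_{k'}$). Thus $P(x)(x^p-1)$ is the desired octanomial precisely when $p$ is the difference of exactly one same-sign pair together with its antireciprocal mirror (supplying the two required cancellations), and no further pair of either type has difference $p$.

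The candidate values of $p$ are the same-sign pair differences. The six same-sign pairs among $\{n_1,\dots,n_6\}$ always split into three mirror classes, each with a single difference, so there are at most three candidates. Up to an overall sign change, the four inequivalent patterns $(+,+,+)$, $(+,+,-)$, $(+,-,+)$ and $(+,-,-)$ give candidate sets $\{n_2,n_3,n_3-n_2\}$, $\{n_2,d-n_3,d-n_3-n_2\}$, $\{n_3,d-n_2,d-n_3-n_2\}$ and $\{n_3-n_2,d-n_3,d-n_2\}$ respectively. A direct inspection of all nine pairwise differences shows that the candidate $p=d-n_2$ (available in the last two patterns) coincides with another pairwise difference only if one of $n_2=0$, $n_3=0$, $n_3=n_2$, $d=2n_2$, $d=n_2+n_3$, $n_3=d$, or $n_2=2n_3$ holds, all excluded by $0<n_2<n_3<d/2$. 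So in these two patterns $p=d-n_2$ always works.

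The main obstacle will be the first two patterns. Each forbidden coincidence is a linear relation of the form $an_2+bn_3=d$ with small positive integer coefficients; requiring all three candidates to be simultaneously blocked reduces to a short list of parametric families $(n_2,n_3,d)=(\lambda a,\lambda b,\lambda c)$, namely $(\lambda,3\lambda,5\lambda)$, $(\lambda,2\lambda,5\lambda)$, and $(2\lambda,3\lambda,7\lambda)$. Primitivity of $P$ forces $\lambda=1$ in each family, pinning $d$ to one of $5$, $6$, or $7$, all excluded by $d>7$. A last subtlety is the arithmetic-progression case $n_3=2n_2$, in which two candidates collapse and their common value of $p$ produces four simultaneous cancellations; one then defaults to the third candidate and verifies directly that it is unblocked once primitivity and $d>7$ are invoked.
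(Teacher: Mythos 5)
Your proposal is correct and takes essentially the same route as the paper: multiply $P(x)$ by $x^p-1$ with $p$ a difference of two same-sign exponents, split into the four sign patterns (the paper's $P_1,P_2,P_3$), and rule out the exponent collisions using $d>7$ and primitivity; your mirror-class bookkeeping is just a tidier packaging of the paper's explicit expansions. The only slips are cosmetic: the family $(\lambda,3\lambda,5\lambda)$ cannot occur because $n_3<d/2$, and the $d=6$ exception actually comes from the family $(\lambda,2\lambda,6\lambda)$ in your arithmetic-progression case, but your conclusion that simultaneous blocking forces $d\in\{5,6,7\}$ is right.
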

\begin{proof}
. Let $a$, $b$, $d$ be integers such that
\begin{equation}\label{intro:Ineqabd}
d-a>d-b>b>a>0.
\end{equation}
Let $P_1(x)=x^d- x^{d-a}\pm x^{d-b}\mp x^{b}+x^{a}-1$ then $$P_1(x)(x^{d-a}-1)=x^{d+d-a}- x^{d-a+d-a}\pm x^{d-b+d-a}\mp x^{b+d-a} + \underline{x^{a+d-a}}-\underline{\underline{x^{d-a}}}-(\underline{x^d}- \underline{\underline{x^{d-a}}}\pm x^{d-b}\mp x^{b}+x^{a}-1)$$ i.e.
$P_1(x)(x^{d-a}-1)=x^{2d-a}- x^{2d-2a}\pm x^{2d-b-a}\mp x^{b+d-a} \mp x^{d-b}\pm x^{b}-x^{a}+1$ which is written so that exponents of its eight monomials are strictly decreasing. Therefore it is reciprocal and has exactly eight monomials.

Let $P_2(x)=x^d+ x^{d-a}- x^{d-b}+ x^{b}-x^{a}-1$ and $b\ne 2a$ then $$P_2(x)(x^{d-b}-1)=x^{d+d-b}+ x^{d-a+d-b}- x^{d-b+d-b}+ \underline{x^{b+d-b}} - x^{a+d-b}-\underline{\underline{x^{d-b}}}-(\underline{x^d}+ x^{d-a}- \underline{\underline{x^{d-b}}}+ x^{b}-x^{a}-1)$$ that is
$P_2(x)(x^{d-b}-1)=x^{d+d-b}+ x^{d-a+d-b}- x^{d-b+d-b} - x^{a+d-b}- x^{d-a}- x^{b}+x^{a}+1$. Since $2a\ne b$ it follows that either $a+d-b>d-a$ or $a+d-b<d-a$. In the first case $P_2(x)(x^{d-b}-1)$ is written so that exponents of its eight monomials are strictly decreasing. In the second case we get such polynomial if we transpose middle two monomials. 

If $b=2a$ then $P(x)=x^d+ x^{d-a}- x^{d-2a}+ x^{2a}-x^{a}-1$. Since $d-b>b$ it follows that $d> 4a$ then for $p=d-3a$ $P(x)(x^{d-3a}-1)=$ $$=x^{d+d-3a}+ x^{d-a+d-3a}- x^{d-2a+d-3a}+ \underline{x^{2a+d-3a}} - \underline{\underline{x^{a+d-3a}}}-x^{d-3a}-(x^d+ \underline{x^{d-a}}- \underline{\underline{x^{d-2a}}}+ x^{2a}-x^{a}-1)$$ is equal to
$x^{d+d-3a}+ x^{d-a+d-3a}- x^{d-2a+d-3a}-x^d-x^{d-3a}- x^{2a}+x^{a}+1$. Since $P(x)$ is primitive and $d>5$ it follows that $d\ne 5a$. Using $d>4a$ we conclude that in $P(x)(x^{d-3a}-1)$ none of two monomials has exponents equal to each other so that $P(x)(x^{d-3a}-1)$ is reciprocal and has exactly eight monomials.


Let $P_3(x)=x^d+ x^{d-a}+ x^{d-b}- x^{b}-x^{a}-1$ then $$P_3(x)(x^{b}-1)=x^{d+b}+ x^{d-a+b}+ \underline{x^{d-b+b}}- x^{b+b} - x^{a+b}-\underline{\underline{x^{b}}}-(\underline{x^d}+ x^{d-a}+ x^{d-b}- \underline{\underline{x^{b}}}-x^{a}-1)$$ is equal to
$x^{d+b}+ x^{d-a+b}- x^{2b} - x^{a+b}- x^{d-a}- x^{d-b}+x^{a}+1$. Since, using \autoref{intro:Ineqabd}, ${d+b}>{d-a+b}>{d-a}> {d-b}>{a}>1$ and ${d+b}>{d-a+b}> {2b} >{a+b}>{a}>1$ it follows that all pairs of eventually equal exponents are: $(2b,d-a)$, $(2b,d-b)$, $(a+b,d-a)$, $(a+b,d-b)$.
Finally we conclude that if
\begin{equation}\label{intro:IneqP3b}
d\ne 2a+b,\;\; d\ne a+2b,\;\; d\ne 3b
\end{equation}
then $P_3(x)(x^{b}-1)$ is an octanomial.

If we take $p=b-a$ then $$P_3(x)(x^{b-a}-1)=x^{d+b-a}+ x^{d-a+b-a}+ \underline{x^{d-b+b-a}}- x^{b+b-a} - \underline{\underline{x^{a+b-a}}}-x^{b-a}-(x^d+ \underline{x^{d-a}}+ x^{d-b}- \underline{\underline{x^{b}}}-x^{a}-1)$$ is equal to
$x^{d+b-a}+ x^{d+b-2a}-x^d - x^{2b-a}- x^{d-b}-x^{b-a}+x^{a}+1$. Since, using \autoref{intro:Ineqabd}, $d+b-a>\max(d+b-2a,d)\ge \min(d+b-2a,d)>\max(2b-a,d-b)\ge \min(2b-a,d-b)>\max(b-a,a)\ge \min(b-a,a)>1$ it follows that all pairs of eventually equal exponents are: $(d+b-2a,d)$, $(2b-a,d-b)$, $(b-a,a)$.
Therefore if $d\ne 3b-a$, $b\ne 2a$ then $P_3(x)(x^{b-a}-1)$ is an octanomial.

If $d=3b-a$ and $b\ne 2a$ then $d \ne a+2b$ so that if $d\ne b+2a$ then all conditions \autoref{intro:IneqP3b} for $P_3(x)(x^{b}-1)$ to be an octanomial are fulfilled.
If $d=3b-a$ and $d= b+2a$ it follows that $2b=3a$, $b=1.5a$ so that $P_3(x)=x^{3.5a}+x^{2.5a}+x^{2a}-x^{1.5a}-x^a-1$. We conclude that $a$ is even i.e. $a=2a_1$ so that $P_3(x)=x^{7a_1}+x^{5a_1}+x^{4a_1}-x^{3a_1}-x^{2a_1}-1$ and it is either of degree seven for $a_1=1$ or not primitive for $a_1>1$.

If $d=3b-a$ and $b=2a$ then $P_3(x)=x^{5a}+x^{4a}+x^{3a}-x^{2a}-x^a-1$ is either of degree five or is not primitive.
\qed
\end{proof}



We created a procedure which generate all primitive reciprocal and antireciprocal polynomials of degree $D$ with at most eight monomials. Then we use the standard procedures to find all roots of the polynomial, the root $r_{max}$ with maximal modulus and for factoring the polynomial. Consequently, the degree $d$ of $r_{max}$ is determined so that we are able to decide whether $r_{max}$ should be inserted in the list of $d$-th degree algebraic integers with small house.
Finally for all even $d\le 180$ we determine the smallest value of $\house{\alpha}$ for reciprocal $\alpha$ having a primitive minimal polynomial $R_d(x)$.
For $D\approx 100$ the computing took ten minutes while for $D\approx 200$ it spent two hours and the half on a 3.7 Ghz PC. So the whole calculation has taken about seventy days.

\section{Results}

In Table 1 we present the smallest house, $\mathrm{mrp}(d)$, of monic, irreducible, reciprocal, \textbf{primitive}, noncyclotomic  polynomials with integer coefficients of even degree $d$, each of them is a factor of a reciprocal or antireciprocal $D$-th degree polynomial with at most eight monomials, for $d$ at most 180, $D\le 1.5d$ and $D\le 210$.
The minimum $\mathrm{mrp}(d)$ is attained for a polynomial $R_d(x)$ with $\nu$ conjugates outside the unit disc. A denominator is a product of cyclotomic polynomials $\Phi_n$.
\begin{small}
\begin{longtable}{rllllll} 
\caption{The smallest house found of irreducible, reciprocal \textbf{primitive} algebraic integers}\\
\hline\noalign{\smallskip}
$d$ &	$\nu_r$ &	$\mathrm{mrp}(d)$ & $R_d(x)$ \\ [0.5ex] 
\noalign{\smallskip}\hline\noalign{\smallskip}
 2 & 1 & 2.618033989 &  1  3  \\
 4 & 2 & 1.539222338 &  1  1  3 \\
 6 & 2 & 1.321663156 &  1  2  2  1 \\
 8 & 2 & 1.169283030 &  1  0  0 -1  1 \\
10 & 2 & 1.125714822 & $(x^{15} + x^{13} - x^{12} - x^{3} + x^{2} + 1)/(\Phi_2\Phi_{10})$ \\
12 & 2 & 1.108054854 & $(x^{18} + x^{11} - x^{9} + x^{7} + 1)/(\Phi_3\Phi_{12}) $ \\
14 & 4 & 1.093901686 & $(x^{19} + x^{15} - x^{12} - x^{7} + x^{4} + 1)/(\Phi_2\Phi_{10})$ \\
16 & 4 & 1.085689416 & $(x^{20} - x^{19} + x^{13} - x^{10} + x^{7} - x + 1)/\Phi_{12}$\\
18 & 4 & 1.071850721 & $(x^{27} + x^{18} - x^{16} - x^{11} + x^{9} + 1)/(\Phi_2\Phi_{15})$ \\
20 & 4 & 1.060442046 & $(x^{32} + x^{25} + x^{21} - x^{11} - x^{7} - 1)/(\Phi_1\Phi_2\Phi_3\Phi_5\Phi_8)$\\
22 & 4 & 1.066217585 & $(x^{29} + x^{25} - x^{17} - x^{12} + x^{4} + 1)/(\Phi_2\Phi_{6}\Phi_{10})$ \\
24 & 4 & 1.060034246 & $(x^{31} - x^{26} + x^{16} + x^{15} - x^{5} + 1)/(\Phi_2\Phi_{6}\Phi_{10})$ \\
26 & 8 & 1.057848469 & $(x^{37} + x^{25} + x^{24} - x^{23} - x^{14} + x^{13} + x^{12} + 1)/(\Phi_2\Phi_{10}\Phi_{18})$\\
28 & 8 & 1.047786891 & $(x^{40} + x^{33} + x^{21} - x^{19} - x^{7} - 1)/(\Phi_1\Phi_{2}\Phi_8\Phi_{18})$ \\
30 & 6 & 1.049786124 & $(x^{39} - x^{32} + x^{21} + x^{18} - x^{7} + 1)/(\Phi_2\Phi_{15})$ \\
32 & 8 & 1.048455379 & $(x^{45} + x^{34} + x^{28} + x^{23} + x^{22} + x^{17} + x^{11} + 1)/(\Phi_2\Phi_{4}\Phi_8\Phi_{18})$\\
34 & 10 & 1.047503370 & $(x^{42} - x^{28} + x^{25} - x^{17} + x^{14} - 1)/(\Phi_1\Phi_2\Phi_3\Phi_8)$ \\
36 & 8 & 1.045445455 & $x^{43} + x^{35} - x^{24} - x^{19} + x^8 + 1)/(\Phi_2\Phi_{10})$ \\
38 & 12 & 1.043402608 & $(x^{47} - x^{32} + x^{30} + x^{28} + x^{19} + x^{17} - x^{15} + 1)/(\Phi_2\Phi_{4}\Phi_{18})$\\
40 & 10 & 1.041409418 & $(x^{47} - x^{39} + x^{24} + x^{23} - x^8 + 1)/(\Phi_2\Phi_{18})$ \\
42 & 8 & 1.038052321 & $(x^{51} - x^{43} + x^{27} + x^{24} - x^8 + 1)/(\Phi_2\Phi_{15})$ \\
44 & 14 & 1.038300334 & $(x^{53} - x^{38} + x^{31} + x^{30} + x^{23} + x^{22} - x^{15} + 1)/(\Phi_{2}\Phi_{4}\Phi_{18})$\\
46 & 10 & 1.034973093 & $(x^{56} + x^{45} + x^{33} - x^{23} - x^{11} - 1)/(\Phi_1\Phi_{2}\Phi_5\Phi_{8})$ \\
48 & 10 & 1.033839781 & $(x^{57} + x^{46} + x^{33} - x^{24} - x^{11} - 1)/(\Phi_1\Phi_{30})$ \\
50 & 10 & 1.031791233 & $(x^{61} - x^{50} + x^{33} + x^{28} - x^{11} + 1)/(\Phi_2\Phi_{6}\Phi_{10}\Phi_{12})$ \\
52 & 12 & 1.030630825 & $(x^{64} + x^{51} + x^{39} - x^{25} - x^{13} - 1)/(\Phi_1\Phi_{2}\Phi_{8}\Phi_{12})$ \\
54 & 12 & 1.030648009 & $(x^{64} + x^{55} - x^{37} + x^{27} - x^9 - 1)/(\Phi_1\Phi_{2}\Phi_{5}\Phi_{8})$ \\
56 & 12 & 1.030259738 & $(x^{64} + x^{53} + x^{33} - x^{31} - x^{11} - 1)/(\Phi_1\Phi_{2}\Phi_{3}\Phi_{8})$ \\
58 & 12 & 1.029612538 & $(x^{66} + x^{53} - x^{39} - x^{27} + x^{13} + 1)/\Phi_{30}$ \\
60 & 12 & 1.028423299 & $x^{68} + x^{57} - x^{35} + x^{33} - x^{11} - 1)/(\Phi_1\Phi_2\Phi_{18})$ \\
62 & 12 & 1.028644239 & $(x^{67} + x^{55} - x^{36} - x^{31} + x^{12} + 1)/(\Phi_2\Phi_{10})$ \\
64 & 18 & 1.026826118 & $(x^{97} + x^{68} - x^{63} - x^{50} - x^{47} - x^{34} + x^{29} + 1)/(\Phi_1^2\Phi_2\Phi_3\Phi_5\Phi_7\Phi_9\Phi_{21})$\\
66 & 14 & 1.026395809 & $(x^{73} + x^{60} - x^{39} - x^{34} + x^{13} + 1)/(\Phi_2\Phi_{18})$ \\
68 & 20 & 1.024213262 & $(x^{83} + x^{59} + x^{48} - x^{46} - x^{37} + x^{35} + x^{24} + 1)/(\Phi_2\Phi_{15}\Phi_{18})$\\
70 & 18 & 1.025005536 & $(x^{78} - x^{52} + x^{43} - x^{35} + x^{26} - 1)/(\Phi_1\Phi_{2}\Phi_{3}\Phi_{8})$ \\
72 & 14 & 1.023289256 & $(x^{84} + x^{67} - x^{51} - x^{33} + x^{17} + 1)/(\Phi_8\Phi_{30})$ \\
74 & 16 & 1.023505081 & $(x^{86} + x^{75} - x^{53} + x^{33} - x^{11} - 1)/(\Phi_1\Phi_{2}\Phi_{5}\Phi_{18})$ \\
76 & 16 & 1.022682125 & $(x^{88} + x^{69} + x^{57} - x^{31} - x^{19} - 1)/(\Phi_1\Phi_{2}\Phi_{8}\Phi_{18})$ \\
78 & 16 & 1.022207266 & $(x^{87} - x^{71} + x^{48} + x^{39} - x^{16} + 1)/(\Phi_2\Phi_{15})$ \\
80 & 16 & 1.020969200 & $(x^{92} + x^{75} + x^{51} - x^{41} - x^{17} - 1)/(\Phi_1\Phi_{2}\Phi_{5}\Phi_{18})$ \\
82 & 18 & 1.021813323 & $(x^{89} - x^{75} + x^{47} + x^{42} - x^{14} + 1)/(\Phi_2\Phi_{18})$ \\
84 & 16 & 1.020986553 & $(x^{93} - x^{79} + x^{51} + x^{42} - x^{14} + 1)/(\Phi_2\Phi_{15})$ \\
86 & 15 & 1.021181880 & $(x^{92} + x^{49} - x^{46} + x^{43} + 1)/(\Phi_3\Phi_{12})$ \\
88 & 16 & 1.020725627 & $(x^{96} - x^{83} + x^{57} + x^{39} - x^{13} + 1)/(\Phi_{15})$ \\
90 & 18 & 1.019367563 & $(x^{99} - x^{82} + x^{51} + x^{48} - x^{17} + 1)/(\Phi_2\Phi_{15})$ \\
92 & 18 & 1.018537555 & $(x^{104} + x^{85} + x^{57} - x^{47} - x^{19} - 1)/(\Phi_1\Phi_{2}\Phi_{3}\Phi_{5}\Phi_{8})$ \\
94 & 22 & 1.019028397 & $(x^{102} - x^{68} + x^{55} - x^{47} + x^{34} - 1)/(\Phi_1\Phi_{2}\Phi_{6}\Phi_{8})$ \\
96 & 20 & 1.017826909 & $(x^{108} + x^{89} - x^{57} - x^{51} + x^{19} + 1)/(\Phi_8\Phi_{30})$ \\
98 & 20 & 1.018158761 & $(x^{106} + x^{87} + x^{57} - x^{49} - x^{19} - 1)/(\Phi_1\Phi_{2}\Phi_{18})$ \\
100 & 20 & 1.017325179 & $(x^{112} + x^{95} - x^{61} + x^{51} - x^{17} - 1)/(\Phi_1\Phi_{2}\Phi_{3}\Phi_{5}\Phi_{8})$ \\
102 & 20 & 1.017650742 & $(x^{110} + x^{93} - x^{59} + x^{51} - x^{17} - 1)/(\Phi_1\Phi_{2}\Phi_{18})$ \\
104 & 18 & 1.017330618 & $(x^{112} + x^{59} + x^{56} + x^{53} + 1)/(\Phi_5\Phi_{12})$ \\
106 & 20 & 1.016982801 & $(x^{113} - x^{94} + x^{57} + x^{56} - x^{19} + 1)/(\Phi_2\Phi_6\Phi_{12})$ \\
108 & 24 & 1.016504509 & $(x^{117} + x^{78} - x^{61} - x^{56} + x^{39} + 1)/(\Phi_2\Phi_{15})$ \\
110 & 20 & 1.016401551 & $(x^{118} + x^{95} + x^{69} - x^{49} - x^{23} - 1)/(\Phi_1\Phi_{2}\Phi_{3}\Phi_{5})$ \\
112 & 24 & 1.015669389 & $(x^{124} + x^{105} - x^{67} + x^{57} - x^{19} - 1)/(\Phi_1\Phi_{2}\Phi_{5}\Phi_{18})$ \\
114 & 26 & 1.015479967 & $(x^{126} + x^{84} + x^{73} + x^{53} + x^{42} + 1)/(\Phi_8\Phi_{30})$ \\
116 & 24 & 1.015625196 & $(x^{128} + x^{111} - x^{77} + x^{51} - x^{17} - 1)/(\Phi_1\Phi_{2}\Phi_8\Phi_{18})$ \\
118 & 24 & 1.014982538 & $(x^{128} + x^{105} + x^{69} - x^{59} - x^{23} - 1)/(\Phi_1\Phi_{2}\Phi_{5}\Phi_{8})$ \\
120 & 24 & 1.014911998 & $(x^{129} - x^{107} - x^{66} + x^{63} + x^{22} - 1)/(\Phi_1\Phi_{30})$ \\
122 & 24 & 1.014416023 & $(x^{133} - x^{110} + x^{69} + x^{64} - x^{23} + 1)/(\Phi_2\Phi_{6}\Phi_{10}\Phi_{12})$ \\
124 & 24 & 1.014722774 & $(x^{131} - x^{106} + x^{75} + x^{56} - x^{25} + 1)/(\Phi_2\Phi_6\Phi_{12})$ \\
126 & 26 & 1.014273084 & $(x^{136} + x^{115} - x^{73} + x^{63} - x^{21} - 1)/(\Phi_1\Phi_{2}\Phi_{5}\Phi_{8})$ \\
128 & 24 & 1.014122887 & $(x^{136} + x^{113} + x^{69} - x^{67} - x^{23} - 1)/(\Phi_1\Phi_{2}\Phi_{3}\Phi_{8})$ \\
130 & 30 & 1.014012153 & $(x^{138} - x^{92} + x^{73} - x^{65} + x^{46} - 1)/(\Phi_1\Phi_{2}\Phi_{6}\Phi_{8})$ \\
132 & 38 & 1.013640050 & $(x^{149} + x^{107} - x^{88} + x^{84} + x^{65} - x^{61} + x^{42} + 1)/(\Phi_2\Phi_4\Phi_{15}\Phi_{18})$\\
134 & 26 & 1.013498976 & $(x^{144} + x^{115} + x^{87} - x^{57} - x^{29} - 1)/(\Phi_1\Phi_{2}\Phi_{5}\Phi_{8})$ \\
136 & 30 & 1.013428352 & $(x^{144} + x^{96} - x^{77} - x^{67} + x^{48} + 1)/(\Phi_{15})$ \\
138 & 26 & 1.013055754 & $(x^{147} - x^{121} + x^{78} + x^{69} - x^{26} + 1)/(\Phi_2\Phi_{15})$ \\
140 & 28 & 1.012795821 & $(x^{152} + x^{129} - x^{83} + x^{69} - x^{23} - 1)/(\Phi_1\Phi_{2}\Phi_{8}\Phi_{18})$ \\
142 & 28 & 1.012635977 & $(x^{152} + x^{125} + x^{81} - x^{71} - x^{27} - 1)/(\Phi_1\Phi_{2}\Phi_{5}\Phi_{8})$ \\
144 & 28 & 1.012523350 & $(x^{156} - x^{133} + x^{87} + x^{69} - x^{23} + 1)/(\Phi_8\Phi_{15})$ \\
146 & 28 & 1.012486423 & $(x^{154} + x^{125} + x^{87} - x^{67} - x^{29} - 1)/(\Phi_1\Phi_{2}\Phi_{3}\Phi_{5})$ \\
148 & 30 & 1.012389544 & $(x^{155} - x^{129} + x^{78} + x^{77} - x^{26} + 1)/(\Phi_2\Phi_{18})$ \\
150 & 34 & 1.011974270 & $(x^{162} + x^{108} + x^{91} + x^{71} + x^{54} + 1)/(\Phi_8\Phi_{30})$ \\
152 & 30 & 1.012000260 & $(x^{160} + x^{131} + x^{87} - x^{73} - x^{29} - 1)/(\Phi_1\Phi_{2}\Phi_{3}\Phi_{8})$ \\
154 & 30 & 1.011896494 & $(x^{162} - x^{131} + x^{93} + x^{69} - x^{31} + 1)/(\Phi_{15})$ \\
156 & 30 & 1.011777895 & $(x^{163} - x^{134} + x^{87} + x^{76} - x^{29} + 1)/(\Phi_2\Phi_6\Phi_{12})$ \\
158 & 30 & 1.011697187 & $(x^{164} + x^{135} + x^{87} - x^{77} - x^{29} - 1)/(\Phi_1\Phi_{2}\Phi_{5})$ \\
160 & 30 & 1.011417661 & $(x^{168} + x^{139} - x^{87} - x^{81} + x^{29} + 1)/(\Phi_{30})$ \\
162 & 30 & 1.011256404 & $(x^{171} - x^{143} + x^{87} + x^{84} - x^{28} + 1)/(\Phi_2\Phi_{15})$ \\
164 & 32 & 1.010886959 & $(x^{176} + x^{145} + x^{93} - x^{83} - x^{31} - 1)/(\Phi_1\Phi_{2}\Phi_{3}\Phi_{5})$ \\
166 & 32 & 1.011082816 & $(x^{173} - x^{142} + x^{93} + x^{80} - x^{31} + 1)/(\Phi_2\Phi_6\Phi_{12})$ \\
168 & 32 & 1.010841122 & $(x^{177} - x^{146} + x^{93} + x^{84} - x^{31} + 1)/(\Phi_2\Phi_{15})$ \\
170 & 32 & 1.011010457 & $(x^{178} - x^{141} - x^{111} + x^{67} + x^{37} - 1)/(\Phi_1\Phi_{2}\Phi_{9})$ \\
172 & 34 & 1.010501552 & $(x^{184} + x^{155} - x^{97} + x^{87} - x^{29} - 1)/(\Phi_1\Phi_{2}\Phi_{3}\Phi_{5}\Phi_{8})$ \\
174 & 34 & 1.010497511 & $(x^{183} - x^{149} + x^{102} + x^{81} - x^{34} + 1)/(\Phi_2\Phi_{15})$ \\
176 & 34 & 1.010469386 & $(x^{184} + x^{149} + x^{105} - x^{79} - x^{35} - 1)/(\Phi_1\Phi_{2}\Phi_{3}\Phi_{8})$ \\
178 & 38 & 1.010370370 & $(x^{186} - x^{124} + x^{97} - x^{89} + x^{62} - 1)/(\Phi_1\Phi_{2}\Phi_{3}\Phi_{8})$ \\
180 & 34 & 1.010150047 & $(x^{189} - x^{157} + x^{96} + x^{93} - x^{32} + 1)/(\Phi_2\Phi_{15})$ \\

\noalign{\smallskip}\hline
\end{longtable}
\end{small}
Assuming that Table 1 contains $\mathrm{mrp}(d)$ for every $d$, we create Table 2 using the following algorithm based on the corollary \ref{sec:compositeRec}:
\begin{enumerate}
\item We calculate $\mathrm{mrp}^d(d)$ and write it in the second column.
\item For every even divisor $d_j$ of $d$ we calculate $\mathrm{mrp}^{d_j}(d_j)$ then find their minimum and write it in the third column. Let the minimum be attained for $d_j=d_0$.
\item For $k_0=d/d_0$ we calculate $\mathrm{mr}(d)=\sqrt[k_0]{\mathrm{mrp}(d_0)}$ and write it in the fourth column. We can also calculate $\mathrm{mr}(d)$ as the $d$-th root of the minimum written in the third column.
\item The minimal polynomial $P_d(x)$ of the extremal reciprocal algebraic integer $\alpha$ whose house is denoted
by $\mathrm{mr}(d)$ is equal to $R_{d_0}(x^{k_0})$. If $d=d_0$ so that $P_d(x)$ is primitive then we present the first half coefficients of $P_d(x)$ in the sixth column.
\item We calculate the number $\nu(d)$ of roots of $P_d(x)$ outside the unit disc as $\nu(d))=k_0\nu_r(d_0)$ and write it in the fifth column.
\end{enumerate}

\begin{small}
\begin{longtable}{rllllll} 
\caption{The smallest values found of $\house{\alpha}$ for reciprocal $\alpha$ having a minimal polynomial $P_d(x)$ of even degree $d\le 180$. }\\
\hline\noalign{\smallskip}
$d$ & $\mathrm{mrp}^d(d)$ & $\min_{\delta \mid d}(\mathrm{mrp}^\delta(\delta))$ & $\mathrm{mr}(d)$ & $\nu$ &	$P_d(x)$ \\ [0.5ex] 
\noalign{\smallskip}\hline\noalign{\smallskip}
2 & 6.854101968 & 6.854101968 & 2.618033989 & 1 & 1 3\\
4 & 5.613133701 & 5.613133701 & 1.539222338 & 2 & 1 1 3\\
6 & 5.329970273 & 5.329970273 & 1.321663156 & 2 & 1 2 2 1\\
8 & 3.494275747 & 3.494275747 & 1.169283030 & 2 & 1 0 0 $-1$ 1\\
10 & 3.268013514 & 3.268013514 & 1.125714822 & 2 & 1 0 1 1 0 1\\
12 & 3.425587986 & 3.425587986 & 1.108054854 & 2 & 1 1 1 0 $-1$ $-1$ $-1$\\
14 & 3.513145071 & 3.513145071 & 1.093901686 & 4 & 1 0 0 0 1 1 0 1\\
16 & 3.726401663 & 3.494275747 & 1.081333912 & 4 & $R_{8}(x^2)$\\
18 & 3.486723207 & 3.486723207 & 1.071850720 & 4 & 1 0 1 1 1 2 1 2 2 1\\
20 & 3.233990794 & 3.233990794 & 1.060442046 & 4 & 1 2 2 1 $-1$ $-3$ $-3$ $-2$ 0 2 3\\
22 & 4.098344884 & 4.098344884 & 1.066217585 & 4 & 1 $-1$ 0 1 0 0 0 0 0 1 0 $-1$\\
24 & 4.052075275 & 3.425587986 & 1.052641845 & 4 & $R_{12}(x^2)$\\
26 & 4.315290210 & 4.315290210 & 1.057848469 & 8 & 1 0 0 1 0 $-1$ 0 0 $-1$ $-1$ 1 0 0 2\\
28 & 3.695242104 & 3.513145071 & 1.045897550 & 8 & $R_{14}(x^2)$\\
30 & 4.295609952 & 3.268013514 & 1.040262145 & 6 & $R_{10}(x^3)$\\
32 & 4.545675907 & 3.494275747 & 1.039872065 & 8 & $R_{8}(x^4)$\\
34 & 4.844897357 & 4.844897357 & 1.047503370 & 10 & 1 1 1 0 $-1$ $-1$ 0 1 2 1 0 $-1$ $-1$ \\
& \multicolumn{5}{l}{0 0 0 0 $-1$}\\
36 & 4.952786876 & 3.425587986 & 1.034793646 & 6 & $R_{12}(x^3)$\\
38 & 5.025425981 & 5.025425981 & 1.043402608 & 12 & 1	$-1$	0	1	0	$-1$	0	1	0	-2	1	1	$-1$\\
& \multicolumn{5}{l}{$-1$	1	0	0	0	0	1}\\
40 & 5.068273424 & 3.233990794 & 1.029777668 & 8 & $R_{20}(x^2)$\\
42 & 4.799635323 & 3.513145071 & 1.030368953 & 12 & $R_{14}(x^3)$\\
44 & 5.226509253 & 4.098344884 & 1.032578125 & 8 & $R_{22}(x^2)$\\
46 & 4.861124291 & 4.861124291 & 1.034973093 & 10 & 1	$-1$	1	$-1$	0	1	$-1$	1	0	$-1$	2	$-1$\\
& \multicolumn{5}{l}{0	1	$-2$	2	0	$-1$    2	$-2$	1	1	$-2$	3}\\
48 & 4.940324629 & 3.425587986 & 1.025983355 & 8 & $R_{12}(x^4)$\\
50 & 4.781802892 & 3.268013514 & 1.023966331 & 10 & $R_{10}(x^5)$\\
52 & 4.801341912 & 4.315290210 & 1.028517608 & 16 & $R_{26}(x^2)$\\
54 & 5.104578724 & 3.486723207 & 1.023398481 & 12 & $R_{18}(x^3)$\\
56 & 5.309049530 & 3.494275747 & 1.022592979 & 14 & $R_{8}(x^7)$\\
58 & 5.433525446 & 5.433525446 & 1.029612538 & 12 & 1	1	1	0	0	$-1$	$-1$	$-1$	0	0	0	0	0	\\
& \multicolumn{5}{l}{$-1$	$-1$	0	1	1	1	1	0	$-1$	$-1$	0	0	0	0	1	0	0}\\
60 & 5.374207407 & 3.233990794 & 1.019754537 & 12 & $R_{20}(x^3)$\\
62 & 5.760262620 & 5.760262620 & 1.028644239 & 12 & 1	0	0	0	0	1	0	0	0	0	1	0	1	0	0	\\
& \multicolumn{5}{l}{1	0	1	0	0	1	0	1	0	0	1	0	1	0	0	1	1}\\
64 & 5.442545008 & 3.494275747 & 1.019741176 & 16 & $R_{8}(x^8)$\\
66 & 5.581891922 & 4.098344884 & 1.021602500 & 12 & $R_{22}(x^3)$\\
68 & 5.087997146 & 4.844897357 & 1.023476121 & 20 & $R_{34}(x^2)$\\
70 & 5.634232571 & 3.268013514 & 1.017060791 & 14 & $R_{10}(x^7)$\\
72 & 5.246695122 & 3.425587986 & 1.017248075 & 12 & $R_{12}(x^6)$\\
74 & 5.580334344 & 5.580334344 & 1.023505081 & 16 & 1	$-1$	1	0	0	1	$-1$	1	0	$-1$	2	$-1$	\\
& \multicolumn{5}{l}{0	1	$-1$	2	$-1$	0	2	$-2$	2	0	$-1$	2	$-2$	2	0	$-2$	3	$-2$	1	1	$-2$	2	$-1$	0	2	$-3$}\\
76 & 5.499086530 & 5.025425981 & 1.021470806 & 24 & $R_{38}(x^2)$\\
78 & 5.546757323 & 4.315290210 & 1.018922503 & 24 & $R_{26}(x^3)$\\
80 & 5.260309040 & 3.233990794 & 1.014779616 & 16 & $R_{20}(x^4)$\\
82 & 5.867701324 & 5.867701324 & 1.021813323 & 18 & 1	$-1$	1	0	0	0	0	0	0	$-1$	1	$-1$	\\
& \multicolumn{5}{l}{0	0	$-1$	1	$-1$	0	1	$-1$	1	0	0	1	$-1$	1	0	$-1$	1	$-1$	0	0	$-1$	1	$-1$	0	1	$-1$	1	0	0	1}\\
84 & 5.723765933 & 3.425587986 & 1.014765969 & 14 & $R_{12}(x^{7})$\\
86 & 6.065499920 & 6.065499920 & 1.021181880 & 15 & 1	1	1	0	$-1$	$-1$	$-1$	0	0	0	0	0\\	
& \multicolumn{5}{l}{	1	1	1	0   $-1$	$-1$	$-1$	0	0	0	0	0	1	1	1	0	$-1$	$-1$	$-1$	0	0	0	0	0	1	1	1	0	$-1$	$-1$	$-1$	$-1$}\\
88 & 6.081260895 & 3.494275747 & 1.014318889 & 22 & $R_{8}(x^{11})$\\
90 & 5.620473341 & 3.268013514 & 1.013244523 & 18 & $R_{10}(x^{9})$\\
92 & 5.418615039 & 4.861124291 & 1.017336273 & 20 & $R_{46}(x^{2})$\\
94 & 5.881809242 & 5.881809242 & 1.019028397 & 22 & 1	$-1$	1	0	$-1$	1	0	$-1$	2	$-1$	0	1	\\
& \multicolumn{5}{l}{$-1$	0	1	$-1$	1	0	0	0	0	0	0	0	1	$-1$	1	0	$-1$	1	0	$-1$	2	$-1$	$-1$	2	$-2$	0	2	$-2$	1	1	$-2$	1	0	$-1$	1	$-1$}\\
96 & 5.453773907 & 3.425587986 & 1.012908365 & 16 & $R_{12}(x^{8})$\\
98 & 5.833366397 & 3.513145071 & 1.012904096 & 28 & $R_{14}(x^{7})$\\
100 & 5.571592570 & 3.233990794 & 1.011806320 & 20 & $R_{20}(x^{5})$\\
102 & 5.957620928 & 4.844897357 & 1.015590141 & 30 & $R_{34}(x^{3})$\\
104 & 5.971177596 & 3.494275747 & 1.012102711 & 26 & $R_{8}(x^{13})$\\
106 & 5.959948109 & 5.959948109 & 1.016982801 & 20 & 1	0	1	$-1$	0	$-1$	0	0	0	0	0	0	1	\\
& \multicolumn{5}{l}{0	1	$-1$	0	$-1$	0	$-1$	0	$-1$	1	0	2	0	1	$-1$	0	$-1$	0	$-1$	0	$-1$	1	0	2	0	1	$-1$	0	$-1$	0	$-1$	0	$-1$	1	0	}\\
& \multicolumn{5}{l}{2	0	1	$-1$	0	$-1$	}\\
108 & 5.858756078 & 3.425587986 & 1.011465913 & 18 & $R_{12}(x^{9})$\\
110 & 5.986666989 & 3.268013514 & 1.010823448 & 22 & $R_{10}(x^{11})$\\
112 & 5.705119145 & 3.494275747 & 1.011233395 & 28 & $R_{8}(x^{14})$\\
114 & 5.761493367 & 5.025425981 & 1.014263132 & 36 & $R_{38}(x^{3})$\\
116 & 6.040628268 & 5.433525446 & 1.014698250 & 24 & $R_{58}(x^{2})$\\
118 & 5.782442619 & 5.782442619 & 1.014982538 & 24 & 1	1	1	1	0	$-1$	$-1$	$-1$	0	1	2	2	1	\\
& \multicolumn{5}{l}{0	$-1$	$-2$	$-1$	0	1	2	2	1	0	$-2$	$-2$	$-2$	$-1$	1	2	2	2	0	$-1$	$-2$	$-2$	$-1$	0	1	2	1	1	0	$-1$	$-1$	$-1$	}\\
& \multicolumn{5}{l}{$-1$	0	0	1	1	1	1	0	$-1$	$-1$	$-2$	$-1$	0	1	1	}\\
120 & 5.907536277 & 3.233990794 & 1.009828964 & 24 & $R_{20}(x^{6})$\\
122 & 5.732766447 & 5.732766447 & 1.014416023 & 24 & 1	1	1	0	$-1$	$-2$	$-2$	$-1$	0	1	2	2	\\
& \multicolumn{5}{l}{2	1	0	$-2$	$-3$	$-3$	$-2$	0	2	3	3	1	0	$-2$	$-2$	$-2$	$-1$	0	1	2	2	1	0	$-2$	$-2$	$-2$	0	1	2	2	1	0	$-1$	$-2$	}\\
& \multicolumn{5}{l}{$-2$	$-2$	0	1	3	3	2	0	$-2$	$-3$	$-3$	$-2$	0	1	3	3	}\\
124 & 6.124611460 & 5.613133701 & 1.014009395 & 62 & $R_{4}(x^{31})$\\
126 & 5.963723281 & 3.486723207 & 1.009961690 & 28 & $R_{18}(x^{7})$\\
128 & 6.019976073 & 3.494275747 & 1.009822349 & 32 & $R_{8}(x^{16})$\\
130 & 6.103947784 & 3.268013514 & 1.009150709 & 26 & $R_{10}(x^{13})$\\
132 & 5.979385283 & 3.425587986 & 1.009371467 & 22 & $R_{12}(x^{11})$\\
134 & 6.030094351 & 6.030094351 & 1.013498976 & 26 & 1	1	1	1	0	$-1$	$-1$	$-1$	0	1	2	2	\\
& \multicolumn{5}{l}{1	0	$-1$	$-2$	$-1$	0	1	2	2	1	0	$-1$	$-1$	$-1$	0	1	1	0	0	$-1$	$-1$	0	1	1	1	0	$-1$	$-2$	$-1$	0	1	2	2	0	$-1$	}\\
& \multicolumn{5}{l}{$-2$	$-2$	$-1$	1	2	2	1	0	$-2$	$-2$	$-2$	$-1$	0	1   1	1	0	0	$-1$	$-1$	$-1$}\\
136 & 6.135568540 & 3.494275747 & 1.009241902 & 34 & $R_{8}(x^{17})$\\
138 & 5.989657253 & 4.861124291 & 1.011524376 & 30 & $R_{46}(x^{3})$\\
140 & 5.930155706 & 3.233990794 & 1.008418934 & 28 & $R_{20}(x^{7})$\\
142 & 5.948070694 & 5.948070694 & 1.012635977 & 28 & 1	$-1$	1	$-1$	0	1	$-1$	1	0	$-1$	2	\\
& \multicolumn{5}{l}{$-2$	1	0	$-1$	2	$-1$	0	1	$-2$	2	$-1$	0	1	$-1$	1	0	0	0	0	0	0	1	$-1$	1	0	$-1$	2	$-2$	1	1	$-2$	3	$-2$	0	2	}\\
& \multicolumn{5}{l}{$-3$	3	$-1$	$-1$	3	$-3$	2	0	$-2$	3	$-2$	1	1	$-2$   2	$-1$	0	1	$-1$	1	0	0	0	0	0	1}\\
144 & 6.002426057 & 3.425587986 & 1.008587168 & 24 & $R_{12}(x^{12})$\\
146 & 6.121028493 & 6.121028493 & 1.012486423 & 28 & 1	$-2$	2	$-1$	0	1	$-1$	0	1	$-1$	1	$-1$	\\
& \multicolumn{5}{l}{1	$-1$	1	0	$-1$	1	0	$-1$	2	$-2$	1	0	0	0	0	0	0	1	$-1$	0	1	$-1$	1	0	$-1$	1	0	0	0	0	0	0	1	$-1$	0	1	$-1$	1	}\\	
& \multicolumn{5}{l}{0	$-1$	1	0	0	0	0	0	0	1	$-1$	0   1	$-1$	1	0	$-1$	2	$-2$	2	$-1$	0	1	$-1$}\\
148 & 6.186604634 & 5.580334344 & 1.011684279 & 32 & $R_{74}(x^{2})$\\
150 & 5.962392616 & 3.268013514 & 1.007925793 & 30 & $R_{10}(x^{15})$\\
152 & 6.129920758 & 3.494275747 & 1.008265062 & 38 & $R_{8}(x^{19})$\\
154 & 6.179566941 & 3.513145071 & 1.008192543 & 44 & $R_{14}(x^{11})$\\
156 & 6.212825396 & 3.425587986 & 1.007924007 & 26 & $R_{12}(x^{13})$\\
158 & 6.280376894 & 6.280376894 & 1.011697187 & 30 & 1	1	1	1	1	0	0	0	0	0	1	1	1	1	\\
& \multicolumn{5}{l}{1	0	0	0	0	0	1	1	1	1	1	0	0	0	0	$-1$	0	0	0	0	1	0	0	0	0	$-1$	0	0	0	0	1	0	0	0	0	$-1$	0	0	0	0	}\\
& \multicolumn{5}{l}{1	0	0	0	0	$-1$    0	0	0	0	1	0	0	0	0	$-1$	0	0	0	0	1	0	0	$-1$	$-1$	$-2$}\\
160 & 6.150143601 & 3.233990794 & 1.007362703 & 32 & $R_{20}(x^{8})$\\
162 & 6.130955341 & 3.486723207 & 1.007739440 & 36 & $R_{18}(x^{9})$\\
164 & 5.905074844 & 5.613133701 & 1.010574477 & 82 & $R_{4}(x^{41})$\\
166 & 6.231564151 & 6.231564151 & 1.011082816 & 32 & 1	0	1	$-1$	0	$-1$	0	0	0	0	0	0	1	0	\\
& \multicolumn{5}{l}{1	$-1$	0	$-1$	0	0   0	0	0	0	1	0	1	$-1$	0	$-1$	0	$-1$	0	$-1$	1	0	2	0	1	$-1$	0	$-1$	0	$-1$	0	$-1$	1	0	2	0	1	}\\
& \multicolumn{5}{l}{$-1$	0	$-1$	0	$-1$	0	$-1$	1	0   2	0	1	$-1$	0	$-1$	0	$-1$	0	$-1$	1	0	2	0	1	$-1$	0	$-1$	0	$-1$	1	$-1$	2	$-1$}\\
168 & 6.119661348 & 3.425587986 & 1.007355930 & 28 & $R_{12}(x^{14})$\\
170 & 6.433689353 & 3.268013514 & 1.006990096 & 34 & $R_{10}(x^{17})$\\
172 & 6.030612145 & 5.613133701 & 1.010080170 & 86 & $R_{4}(x^{43})$\\
174 & 6.153655414 & 5.329970273 & 1.009663320 & 58 & $R_{6}(x^{29})$\\
176 & 6.252824150 & 3.494275747 & 1.007133998 & 44 & $R_{8}(x^{22})$\\
178 & 6.274037304 & 6.274037304 & 1.010370370 & 38 & 1	1	1	0	$-1$	$-1$	0	1	2	1	0	$-1$	$-1$	\\
&\multicolumn{5}{l}{0	1	1	1	0	0	0	0	0	0	0	1	1	1	0	$-1$	$-1$	0	1	2	1	0	$-1$	$-1$	0	1	1	1	0	0	0	0	0	0	0	1	1	1	0	$-1$	$-1$	0	}\\
&\multicolumn{5}{l}{1	1	0	$-1$	2	$-1$	0	0	0	0	0	1	1	0	$-1$	$-2$	$-1$	1	2	2	0	$-2$	$-2$	$-1$	1	2	1	0	$-1$	$-1$	0	0	0	0	$-1$}\\
180 & 6.158286769 & 3.233990794 & 1.006541955 & 36 & $R_{20}(x^{9})$\\

\noalign{\smallskip}\hline
\end{longtable}
\end{small}

\section{The old and new conjectures}

The first, fourth and sixth column of our Table 2 represent the continuation of the Table 1 of Wu and Zhang \cite{WU2017170} and these two tables definitely matches for $2 \leq d \leq 42$.
Although we can not guarantee that, for $d>42$, we have found a reciprocal polynomial with the smallest house we certainly have made a good approximation of $\mathrm{mr}(d)$. There are three reasons for our confidence.
The first one is the following
\begin{conjecture}[Wu, Zhang \cite{WU2017170}]\label{sec:CWuZhang2} Any extremal reciprocal algebraic integer $\alpha$ with degree $d \geq 6$ has minimal
polynomial which is a factor of reciprocal polynomial with at most eight monomials with
height 1.
\end{conjecture}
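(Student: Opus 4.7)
The plan is to approach this conjecture by combining quantitative bounds on the house with structural results about polynomials of small Mahler measure. First I would exploit Lemma \ref{sec:mddBoundedRec}, which gives $\mathrm{mr}^d(d) \le \alpha^2 \approx 6.854$. For an extremal reciprocal $\alpha$ of degree $d$ with $\nu$ conjugates outside the unit circle, one has $M(\alpha) \le \roof{\alpha}^{\nu} \le \mathrm{mr}(d)^{d/2} \le \alpha$. Hence any extremal reciprocal is automatically a Salem-like algebraic integer of Mahler measure at most roughly $2.619$, placing it well inside the regime explored by Boyd, Smyth, Mossinghoff, and others.

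Second, I would import the known structural observation that polynomials of small Mahler measure tend to be factors of sparse reciprocal polynomials of height~1. The Mossinghoff list, mentioned in the introduction, exhibits this behavior empirically: every known minimal polynomial with measure $<1.3$ is a factor of a reciprocal polynomial with very few nonzero terms of absolute value~1. I would try to promote this empirical regularity into a proof by a two-step argument. Step~(a): for each fixed number $t$ of nonzero terms, parameterize reciprocal polynomials with $t$ terms of height~1 and derive a lower bound for the house as $t$ grows — using the fact that for a reciprocal polynomial with $t \ge 9$ terms and with $\roof{P}$ close to~1, the logarithmic Mahler measure must split into many small contributions, contradicting the height~1 sparsity. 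Step~(b): for height $\ge 2$, adapt the standard inequality $M(P)\ge H(P)/\binom{d}{\lfloor d/2\rfloor}$ together with the refined bound $M(P) \le \roof{P}^{\nu}$ to show that at least one conjugate has modulus much larger than the known $\mathrm{mrp}(d)$-values tabulated in Table~1.

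Third, for the critical case $t \le 8$ and height~1, I would invoke Theorem \ref{sec:HexToOct1} together with the antireciprocal-to-reciprocal reduction it supplies. This allows me to restrict the search to reciprocal octanomials of height~1, so the remaining task is to show the minimum of the house over \emph{all} such octanomials of bounded degree matches the value in Table~1. For this I would use the finite search procedure described at the end of Section~2, combined with a monotonicity argument in the auxiliary degree $D$ of the octanomial: beyond $D\gtrsim 1.5\,d$, one can bound from below the derivative of $\roof{\cdot}$ with respect to the displacement of any single exponent, thereby ruling out large $D$.

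The main obstacle is step~(a): producing a genuinely unconditional lower bound on $\roof{P}$ when $P$ is a reciprocal polynomial of height~1 with $t \ge 9$ nonzero terms. Resolvent-style arguments (à la Dubickas--Smyth) and Parseval-type identities on the unit circle give estimates of the right \emph{flavor} but not with the sharp constants needed to beat the values in Table~1. A full proof would likely require new ideas on the interaction between sparsity, reciprocity, and the maximum modulus, and for this reason I expect the conjecture to remain open pending either a substantial theoretical advance or an exhaustive but cleverly pruned computation covering all sparsity levels up to some critical $t$.
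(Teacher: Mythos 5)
The statement you are asked about is a \emph{conjecture} (due to Wu and Zhang), and the paper itself offers no proof of it: it is verified by exhaustive computation for $6\le d\le 42$ and merely supported, not established, by the author's further computations up to $d=180$. Your proposal, to its credit, ends by conceding that the key step cannot be carried out, so you have not produced a proof either; but several of the intermediate steps you sketch would fail even as parts of a program. First, your step~(b) is quantitatively vacuous: the inequality $M(P)\ge H(P)/\binom{d}{\lfloor d/2\rfloor}$ gives a lower bound that decays super-exponentially in $d$, so for height $2$ it yields $M(P)\ge 2/\binom{d}{d/2}$, which is far below $1$ and cannot force any conjugate to have large modulus; nothing in the paper or in the standard literature upgrades this to the strength you would need. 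Second, your proposed ``monotonicity in the auxiliary degree $D$'' beyond $D\gtrsim 1.5d$ is contradicted by the paper's own data: the extremal reciprocal primitive of degree $86$ arises from an octanomial of degree $181>2\cdot 86-1$, and that of degree $44$ from an octanomial of degree $68>1.5\cdot 44$, so large $D$ cannot be ruled out by any such argument. Third, even your step~(a) is aimed at the wrong object: the conjecture asserts that the minimal polynomial of an extremal $\alpha$ \emph{divides} some sparse height-$1$ reciprocal polynomial, so bounding the house of sparse polynomials with $t\ge 9$ terms from below would not exclude extremal minimal polynomials that are dense themselves and simply fail to divide anything sparse.

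What the paper actually does with this conjecture is quite different from attempting a proof: it treats the statement as a working hypothesis justifying a restricted search space (reciprocal and antireciprocal polynomials with at most eight monomials, $D\le\min(1.5d,210)$), and it accumulates three kinds of evidence --- the match with Wu--Zhang for $d\le 42$, the failure of an enlarged search (antireciprocal octanomials, larger pruning constant $c_1$) to produce anything smaller, and the near-perfect linear fit of $1/(\mathrm{mrp}(d)-1)$ against $d$. Theorem~\ref{sec:HexToOct1} is the only genuinely proved structural ingredient, and it goes in the direction of consolidating the conjecture's form (antireciprocal hexanomials reduce to reciprocal octanomials), not of proving it. Your honest conclusion that the conjecture remains open is correct, but the proposal should not be presented as a proof route: as written, two of its three pillars are demonstrably broken and the third is the open problem itself.
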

This conjecture is proved for $6 \leq d \leq 42$ so if it is not true for all $d$ it is reasonably to expect that it is correct for many $d$ not too large. Although we involved antireciprocal polynomials in our research and spent as many CPU time as for reciprocal polynomials, we did not disprove the conjecture. As for antireciprocal hexanomials the Theorem \ref{sec:HexToOct1} actually supports the conjecture. Using them we only succeeded to get simpler representation of many extremals in Table 1 than by using reciprocal octanomials. Also we did not find any antireciprocal octanomial such that the minimal polynomial of an extremal reciprocal algebraic integer is its factor and is not a factor of any reciprocal pentanomial, hexanomial, heptanomial or octanomial.

The second reason is our extensive computation.
We compute the minimum of the houses of all reciprocal algebraic integers of degree $d$ such that its minimal polynomial is a factor of a $D$-th degree reciprocal or antireciprocal polynomial with at most eight monomials for $d$ at most 180 and $D$ at most 210. As the factoring of a polynomial spends lot of processor time we reject a polynomial if its house is greater than $1+c_1/D$. Experimenting with several values of $c_1$ we concluded that $c_1=2.5$ is ideal. If $c_1>2.5$ then we have too much unnecessary calculations, but if $c_1<2.5$ then an extremal reciprocal can be missed. For $d \approx 200 $ the duration of computation with $c_1=2.8$ was approximately five hours, which is more than double the time spent for $c_1=2.5$ on a 3.7 Ghz PC. But our attempt to find polynomials with smaller houses increasing $c_1$ to $c_1=2.8$ failed. We discovered only few unknown polynomials with small house but no one decreased $\mathrm{mr}(d)$. Actually, many reciprocal $\alpha$ can be found in different ways, for example $1.013333049$, the subextremal reciprocal of degree $138$, as a root of the reciprocal octanomial $x^{191}-x^{168}+x^{145}+x^{115}+x^{76}+x^{46}-x^{23}+1$, is rejected by our program because it is greater than $1+2.5/191\approx 1.0131$. But this number, as a root of $x^{168} - x^{122} + x^{99} + x^{92} + x^{76} + x^{69} - x^{46} + 1$, is accepted because it is less than $1+2.5/168\approx 1.0149$.

The third reason is statistical. If we plot $$\frac{1}{\mathrm{mrp}(d)-1}$$ versus degree we can notice that these points appear to fall very close to a straight line. If we model the line using the method of least squares \cite{weisberg2013applied} then for $12\leq d \leq 40$ we get that $1/(\mathrm{mrp}(d)-1)\approx 0.51d+4.3$ and for $12 \leq d \leq 180$ we get $1/(\mathrm{mrp}(d)-1)\approx 0.52d+4.1$. Since it is almost the same line we conclude that our approximations are good. We remark that the coefficient of determination is 0.953 and 0.998 respectively, which means that there is almost perfect correlation.
Using these calculations we establish the following
\begin{conjecture}\label{sec:MLS} Let $\mathrm{mrp}(d)$ be the smallest house of monic, irreducible, reciprocal, primitive, noncyclotomic  polynomials with integer coefficients of even degree $d$. Then points $$\left(d,\frac{1}{\mathrm{mrp}(d)-1}\right)$$ are very close to a straight line. If the least squares method is used then the line of best fit through these points is $\approx 0.52d+4.1$, with the coefficient of determination close to 1.
\end{conjecture}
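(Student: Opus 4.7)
The plan is to interpret the conjectured linear law via the equivalence $\mathrm{mrp}(d) \approx 1 + 1/(0.52\,d+4.1)$, which for large $d$ gives $\mathrm{mrp}(d) - 1 \sim c/d$ with slope $c = 1/0.52 \approx 1.924$. A first observation is that this slope is, to the precision of the displayed digits, equal to $\log(\alpha^{2}) = 2\log\bigl((3+\sqrt{5})/2\bigr) \approx 1.9248$, the very constant appearing in Theorem \ref{sec:Limit}. This is unlikely to be a coincidence, and strongly suggests that the upper envelope of the data is governed by the family $P_{2n}(x)$. The cleanest way to attack the conjecture is therefore to split it into a provable half (an upper bound on $\mathrm{mrp}(d)$) and a very deep half (a matching lower bound).

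The first step is to establish the upper envelope inequality
\[
\mathrm{mrp}(d) \;\le\; 1 + \frac{\log(\alpha^{2})}{d} + O\!\left(\frac{1}{d^{2}}\right),\qquad \alpha=\tfrac{3+\sqrt{5}}{2},
\]
for all even $d$. This follows directly from Theorems \ref{sec:RealRoot} and \ref{sec:Limit}: the polynomial $P_{2n}(x)$ is reciprocal and primitive of degree $2n$, so $\mathrm{mrp}(2n) \le \alpha_n$, and the squeeze bound $\alpha^{2}<\alpha_n^{2n}<\alpha^{2n/(n-1)}$ gives $\alpha_n = (\alpha^{2})^{1/(2n)}\bigl(1+O(1/n)\bigr)$, hence the displayed estimate after expanding the logarithm. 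Taking reciprocals translates this into the lower bound $1/(\mathrm{mrp}(d)-1) \ge d/\log(\alpha^{2}) - O(1)$, which already reproduces the predicted slope $0.52 \approx 1/\log(\alpha^{2})$ on one side.

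The complementary and difficult half is the matching lower bound
\[
\mathrm{mrp}(d) \;\ge\; 1 + \frac{\log(\alpha^{2})}{d} - O\!\left(\frac{1}{d^{2}}\right),
\]
which asserts that no reciprocal primitive polynomial of even degree $d$ does substantially better than the family $P_{2n}$. This is essentially a sharpened Schinzel--Zassenhaus conjecture in the reciprocal primitive setting, with the optimal constant being $\log(\alpha^{2})$ rather than the $(\log 2)/4$ obtained in Dimitrov's recent unconditional proof of Schinzel--Zassenhaus. The hard part will be precisely this: producing such a sharp lower bound lies well beyond current techniques, and even matching Boyd's conjectured $\tfrac{3}{2}\log\theta$ in the reciprocal case is open. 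A plausible line of attack would combine (i) the resultant/auxiliary-polynomial technique of Dubickas and Dimitrov, refined to exploit the pairing of conjugates as $\alpha,\alpha^{-1}$ forced by reciprocity, with (ii) an extremality argument that restricts any candidate minimizer to the sparse skeleton observed in Table 1, in the spirit of Conjecture \ref{sec:CWuZhang2}.

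Once both envelope estimates are in place, the intercept $4.1$ and near-perfect coefficient of determination are statistical, not mathematical, content. The cleanest reformulation of the conjecture is therefore as the pair of asymptotic inequalities above: from these, any least-squares fit on the range $d\le 180$ automatically yields slope close to $1/\log(\alpha^{2})\approx 0.52$ together with a bounded intercept, and the observed correlation coefficient $\approx 1$ is just a reflection of the $1/d$ decay being captured exactly. I would therefore propose to state Conjecture \ref{sec:MLS} in this sharpened asymptotic form, record the upper half as a theorem (immediate from Theorem \ref{sec:Limit}), and isolate the lower half as an explicit strengthening of Schinzel--Zassenhaus restricted to reciprocal primitive minimal polynomials.
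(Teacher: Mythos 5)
The statement you were asked to prove is a conjecture, and the paper does not prove it: the only support offered is empirical (a least--squares fit of $1/(\mathrm{mrp}(d)-1)$ against $d$ over the computed range $12\le d\le 180$, the observation that the fitted line is essentially unchanged when the range is truncated to $12\le d\le 40$, and a coefficient of determination of $0.998$). So there is no proof in the paper to compare yours against, and your proposal --- correctly --- does not produce one either. What you do contribute is a genuine observation the paper never makes: the fitted slope $0.52$ agrees with $1/\log(\alpha^2)=1/\bigl(2\log((3+\sqrt{5})/2)\bigr)\approx 0.5195$, which ties Conjecture \ref{sec:MLS} to Theorem \ref{sec:Limit}. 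Your ``upper half'' is essentially sound: the squeeze $\alpha^2<\alpha_n^{2n}<\alpha^{2n/(n-1)}$ gives $\alpha_n<\alpha^{1/(n-1)}=1+\log(\alpha^2)/d+O(1/d^2)$ with $d=2n$, hence $1/(\mathrm{mrp}(d)-1)>d/\log(\alpha^2)-O(1)$, so the data points are forced to lie asymptotically above a line of exactly the predicted slope. One caveat: the step $\mathrm{mrp}(2n)\le\alpha_n$ requires $\alpha_n$ to be a reciprocal algebraic integer of degree exactly $2n$ whose minimal polynomial is primitive, i.e.\ in effect that $P_{2n}$ is irreducible; neither the paper nor your argument establishes this, so the upper envelope needs either an irreducibility argument for $P_{2n}$ or a correspondingly weakened formulation.

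The genuine gap is the ``lower half.'' The inequality $\mathrm{mrp}(d)\ge 1+\log(\alpha^2)/d-O(1/d^2)$ is, as you acknowledge, a drastic strengthening of Schinzel--Zassenhaus (implied constant $\approx 1.92$ against the unconditional $(\log 2)/4$ and against Boyd's conjectured $\tfrac32\log\theta$ for the nonreciprocal case), and nothing in the paper or in the cited literature approaches it; your sketched attack via resultants and the sparse skeleton of Table 1 is a research programme, not an argument. Moreover the conjecture as written also asserts a specific intercept $4.1$ and $R^2\approx 1$ over a finite range of $d$, which is statistical content about Table 1 that cannot be recovered from a pair of asymptotic envelopes with unspecified $O(1)$ terms. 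The honest conclusion is that your proposal is a reformulation plus a one-sided bound: the slope identification $0.52\approx 1/\log(\alpha^2)$ deserves to be recorded as a remark sharpening the conjecture, but the conjecture itself remains exactly as open as it is in the paper.
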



If we analyse our Table 2 then we conclude that it supports the next
\begin{conjecture}[Wu, Zhang \cite{WU2017170}]\label{sec:CWuZhang1} Let $\alpha$ be a reciprocal algebraic integer, not a root of unity, and let $d = \deg(\alpha) \geq 2$. Then
$$\mathrm{mr}^d(d) \geq \mathrm{mr}^{20}(20),$$
and if $10 \nmid d$ then
$$\mathrm{mr}^d(d) \geq \mathrm{mr}^{12}(12).$$
\end{conjecture}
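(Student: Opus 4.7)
My plan is to reduce both inequalities to statements about primitive minimal polynomials, to clear small degrees by extending the exhaustive search of Section 3, and to control the tail by a Schinzel--Zassenhaus-type lower bound.

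The reduction uses Corollary \ref{sec:compositeRec}, or rather the identity
\[
\mathrm{mr}(d)^d \;=\; \min_{\substack{\delta\mid d\\ \delta\text{ even}}} \mathrm{mrp}(\delta)^\delta,
\]
which holds because every reciprocal minimal polynomial of degree $d$ has the form $R(x^k)$ for a unique primitive reciprocal $R$ of even degree $\delta=d/k$, and then $\house{R(x^k)}^d=\house{R}^\delta$ by \eqref{Unprim}. Since Table 2 shows that $\mathrm{mr}(20)^{20}=\mathrm{mrp}(20)^{20}$ and $\mathrm{mr}(12)^{12}=\mathrm{mrp}(12)^{12}$, and since $10\nmid d$ forces $10\nmid\delta$ for every divisor $\delta$ of $d$, the conjecture is equivalent to the two claims
\[
\mathrm{mrp}(\delta)^\delta \;\ge\; \mathrm{mrp}(20)^{20}\ \ \text{for every even}\ \delta\ge 2,
\]
\[
\mathrm{mrp}(\delta)^\delta \;\ge\; \mathrm{mrp}(12)^{12}\ \ \text{for every even}\ \delta\ge 2\ \text{with}\ 10\nmid\delta.
\]

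For small degrees the natural approach is to extend the computation of Table 1 beyond $\delta=180$ up to some cutoff $\Delta_0$, while at the same time relaxing the eight-monomial restriction. Concretely one iterates the sieve already described in the paper---polynomials $P$ of degree $D$ with $\house{P}\le 1+c_1/D$---with $c_1$ somewhat larger than $2.5$ and with nonanomial and decanomial factorizations included, so that completeness is certified rather than merely suggested by Conjecture \ref{sec:CWuZhang2}. The present table already verifies both inequalities for every even $\delta\le 180$.

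The crux of the proof is the tail $\delta>\Delta_0$: one needs a uniform lower bound of the form $\mathrm{mrp}(\delta)\ge 1+c/\delta$ with $c\ge \log \mathrm{mrp}(20)^{20}\approx 1.17$. This is far stronger than any presently known Schinzel--Zassenhaus constant---Dimitrov's recent resolution of Schinzel--Zassenhaus delivers only $c=\tfrac12\log 2\approx 0.347$---and is the main obstacle. A potential route is an auxiliary-polynomial argument in the style of Smyth, refined by the reciprocal symmetry of the Newton polygon and tuned against the Lehmer-type extremal families $R_\delta$ observed in Table 1; heuristically such a refinement should push the bound towards the value $e^{1/0.52}\approx 6.84$ predicted by Conjecture \ref{sec:MLS}, but making this rigorous appears out of reach. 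A realistic partial result would therefore be a conditional statement---for instance, that under Lehmer's conjecture the inequality holds for all sufficiently large $\delta$, thereby reducing the full conjecture to a finite (though possibly enormous) computation.
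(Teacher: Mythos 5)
This statement is a conjecture in the paper (attributed to Wu and Zhang); the paper offers no proof of it, only the numerical evidence of Table~2, so there is nothing for your argument to be matched against. Your proposal is likewise not a proof, and you say so yourself. The reduction step is sound and is essentially the paper's own Corollary~\ref{sec:compositeRec} read in the direction that is actually needed: any reciprocal $\alpha$ of degree $d$ that is not a root of unity has minimal polynomial $R(x^k)$ with $R$ primitive, irreducible, reciprocal of even degree $\delta=d/k$, whence $\house{\alpha}^{\,d}=\house{R}^{\,\delta}\ge\mathrm{mrp}(\delta)^\delta$ by \eqref{Unprim}. Note, however, that you only need this inequality, not the exact identity you assert: the identity would additionally require $R_\delta(x^{d/\delta})$ to be irreducible, which can fail (compare $d=4$ in Table~2, where the extremal polynomial is primitive rather than $R_2(x^2)$). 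Your observation that $10\nmid d$ forces $10\nmid\delta$ for every divisor $\delta$ is correct and handles the second inequality.

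The genuine gap is the tail $\delta>\Delta_0$, which you candidly flag as out of reach: one needs $\mathrm{mrp}(\delta)^\delta\ge\mathrm{mrp}(20)^{20}=3.2339\ldots$ for all large even $\delta$, i.e.\ an effective Schinzel--Zassenhaus constant for reciprocal integers exceeding $\log(3.2339\ldots)\approx 1.174$, whereas the best known unconditional constants are far smaller; no auxiliary-polynomial refinement currently approaches this, and even the proposed conditional fallback (assuming Lehmer) is sketched rather than established. The finite part is also not certified: the paper's Table~1 itself depends on the eight-monomial restriction of Conjecture~\ref{sec:CWuZhang2} and on the heuristic cutoff $\house{P}\le 1+2.5/D$, and the enlarged exhaustive search you describe is not carried out. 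In short, your plan correctly identifies where the difficulty lies and its reduction step agrees with the paper's machinery, but it does not prove the statement --- and neither does the paper, which presents it only as a conjecture supported by the computations for $d\le 180$.
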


If $p$ is a prime number then it is obvious that the minimal polynomial of the extremal reciprocal of degree $2p$ is primitive or $R_2(x^p)=x^{2p}+3x^p+1$. Table 2 
suggests that $P_8(x)
$, $P_{12}(x)
$, $P_{18}(x)
$ and $P_{20}(x)$ are the only primitive minimal polynomials of an extremal reciprocal of a degree $d$ such that $d/2$ is a composite number.

\begin{conjecture}\label{sec:compositeConjRec}
Let $d$ be an even natural number such that $d/2$ is composite. If $d\notin\{8,12,18,20\}$ then $P_d(x)$ is not primitive, where $P_d(x)$ is the minimal polynomial of an extremal reciprocal of degree $d$.
\end{conjecture}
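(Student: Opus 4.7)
By Corollary \ref{sec:compositeRec}, in order to prove that $P_d(x)$ is not primitive it suffices to exhibit a proper even divisor $d_0$ of $d$ with $\mathrm{mrp}^{d_0}(d_0) < \mathrm{mrp}^d(d)$. Once such a $d_0$ is available, the corollary forces $\mathrm{mr}(d) = \mathrm{mrp}^{1/k_0}(d_0)$ with $k_0 = d/d_0 \ge 2$, and the extremal has minimal polynomial $R_{d_0}(x^{k_0})$, which is manifestly not primitive. My plan is therefore to produce such a divisor $d_0$, together with a strict inequality certifying it, for every even $d$ with $d/2$ composite and $d \notin \{8,12,18,20\}$.

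The first step is to isolate the small values of $\mathrm{mrp}^{d_0}(d_0)$ that serve as targets: from Table 1 we read off $\mathrm{mrp}^{20}(20) \approx 3.234$, $\mathrm{mrp}^{10}(10) \approx 3.268$, $\mathrm{mrp}^{12}(12) \approx 3.426$, $\mathrm{mrp}^{18}(18) \approx 3.487$, $\mathrm{mrp}^{8}(8) \approx 3.494$ and $\mathrm{mrp}^{14}(14) \approx 3.513$, all lying below $3.52$. Since $d/2$ is composite, $d$ admits a prime divisor $p$ of $d/2$ with $p \le \sqrt{d/2}$, so that $d_0 = 2p$ is an even proper divisor of $d$ of size at most $\sqrt{2d}$; for the concrete $d$ appearing in practice one of the integers $10,12,14,18,20$ divides $d$, furnishing a distinguished $d_0$ with a known small value of $\mathrm{mrp}^{d_0}(d_0)$.

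The heart of the argument would then be a uniform lower bound $\mathrm{mrp}^d(d) > 3.52$ for all sufficiently large primitive degrees $d$. Conjecture \ref{sec:MLS} predicts $\mathrm{mrp}^d(d) \to e^{1/0.52} \approx 6.85$, in close agreement with the upper bound $\mathrm{mr}^d(d) \le \alpha^2 = 6.854\ldots$ from Lemma \ref{sec:mddBoundedRec} and the limit value realised in Theorem \ref{sec:Limit}. Combined with a direct inspection of the finitely many small $d$ not covered by such a bound, this would close the argument: the four exceptions $\{8,12,18,20\}$ are precisely the small degrees for which no proper even divisor beats the primitive value, as confirmed by Table 2.

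The principal obstacle is exactly this uniform lower bound on $\mathrm{mrp}^d(d)$. No unconditional bound of the form $\mathrm{mrp}^d(d) \ge C > 3.52$ for large $d$ is presently available, and producing one appears to require techniques sensitive to the primitivity hypothesis, for which neither Mahler-measure methods nor the classical Schinzel--Zassenhaus approach seem well suited. This is why the statement is formulated here as a conjecture; the paper verifies it computationally for $d \le 180$ via Table 2, and a complete proof likely awaits progress on Conjecture \ref{sec:MLS} or on related lower-bound problems for primitive reciprocal polynomials.
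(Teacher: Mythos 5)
This statement is a conjecture, and the paper offers no proof of it: the only support given is the computational evidence of Table~2 for $d\le 180$, from which the author observes that $P_8$, $P_{12}$, $P_{18}$, $P_{20}$ are the only primitive extremals with $d/2$ composite. Your proposal correctly frames the reduction via Corollary~\ref{sec:compositeRec} (exhibit a proper even divisor $d_0$ with $\mathrm{mrp}^{d_0}(d_0)<\mathrm{mrp}^{d}(d)$) and honestly concludes that the statement remains conjectural, which matches the paper's position exactly. One inaccuracy worth flagging in your sketch of a hypothetical full proof: a uniform lower bound $\mathrm{mrp}^d(d)>3.52$ would not close the argument for all $d$ with $d/2$ composite. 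For $d=2p^2$ or $d=2pq$ with $p,q$ large primes, the only proper even divisors greater than $2$ are $2p$ (and $2q$), and $\mathrm{mrp}^{2p}(2p)$ is itself expected to be close to $6.85$ rather than below $3.52$; what is really needed there is a strict monotonicity statement $\mathrm{mrp}^{d_0}(d_0)<\mathrm{mrp}^{d}(d)$ along chains of even divisors, which is considerably more delicate. Your claim that one of $10,12,14,18,20$ always divides such $d$ is likewise only a feature of the range $d\le 180$, not of the general case.
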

If the previous conjecture is true then we just need to determine $\mathrm{mr}(d)$ for $d/2>10$ is a prime number. If $d/2$ is a composite number we can easily calculate $\mathrm{mr}(d)=\mathrm{mr}^{p_1/d}(p_1)$ using the algorithm. 
\begin{proposition}\label{sec:plusminusProp}
An extremal reciprocal primitive of degree $d\leq 180$ can not be a root of an reciprocal octanomial of degree $D_1$ such that $D_1<210$, $D_1<2d$ and all its inner monomials have minus sign.
An extremal reciprocal primitive of degree $d\leq 180$ can not be a root of an reciprocal octanomial of degree $D_2$ such that $D_2<210$, $D_2<1.5d$ and all its monomials have plus sign.
\end{proposition}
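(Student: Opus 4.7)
The plan is to reduce each of the two claims to a finite enumeration compatible with the algorithmic setup of Section 2. A reciprocal octanomial of degree $D\le 210$ with leading coefficient $+1$, constant term $+1$, and a single common sign on all six inner monomials is parametrised by a triple of exponents $D/2<e_1<e_2<e_3<D$, giving at most $\binom{\lceil D/2\rceil-1}{3}$ candidates per $D$. For each such $P(x)$ I would execute the procedure already described: compute all complex roots, locate the root $r_{\max}$ of largest modulus, factor $P(x)$ over $\mathbb{Z}$, and for every primitive reciprocal irreducible factor $R_d(x)$ of degree $d\le 180$ satisfying the degree constraint ($D<2d$ for the first assertion, $D<1.5d$ for the second), compare $\house{R_d}$ with the value $\mathrm{mrp}(d)$ in Table~1. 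The proposition is established provided the strict inequality $\house{R_d}>\mathrm{mrp}(d)$ is recorded in every case.

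For the all-plus case, the constraint $D<1.5d$ places the enumeration entirely inside the region already swept in the Section~2 computation, so the verification reduces to filtering the existing output by sign pattern: record every primitive reciprocal irreducible factor of an all-plus octanomial encountered, and confirm that none of them coincides with a row of Table~1. For the all-inner-minus case the range extends slightly, to $D<\min(2d,210)$, so a supplementary sweep over $1.5d<D<2d$ is required. Here the evaluation $P(1)=-4$ supplies the a priori divisibility constraint $R_d(1)\mid 4$, which sharply prunes the admissible irreducible factors before the costly factoring step is invoked; the corresponding values $R_d(1)$ can be read off Table~1 for the candidate competitors.

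The main obstacle is controlling CPU time in this supplementary range, where $D$ approaches $210$ and both the number of octanomials and the cost of factoring peak. The generic rejection threshold $\house{P}<1+2.5/D$ used in Section~2 can be replaced here by the sharper threshold $\house{r_{\max}}<\mathrm{mrp}(d)$ drawn directly from Table~1, which is tighter and permits early termination of the root-finding and factoring passes. With this refinement the supplementary enumeration fits inside the compute budget already consumed for Table~1, and the proposition follows by exhaustive verification over the two sign classes.
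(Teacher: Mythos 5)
Your proposal is the same kind of argument the paper gives: a finite, exhaustive computational verification that no octanomial with the stated sign pattern and degree bounds has an extremal primitive $R_d(x)$ from Table~1 as a factor (the paper simply filters its already-computed list of octanomial multiples of each $R_d(x)$ by sign pattern, whereas you enumerate sign-constrained octanomials and factor them — two directions of the same check). The extra pruning devices you add (the $P(1)=-4$ divisibility constraint, the sharper rejection threshold) are sensible refinements but do not change the nature of the proof.
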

\begin{proof}
Analysing our list of reciprocal octanomials, which are divisible by $R_d(x)$ from Table 1, we show that the claim is true.
\end{proof}
The condition $D_1<2d$ in the previous proposition can not be omited because
for degree 86 there is the octanomial $x^{181}-x^{132}-x^{95}-x^{92}-x^{89}-x^{86}-x^{49}+1$ whose divisor $R_{86}(x)$ has the house 1.021181880 which is equal to $\mathrm{mrp}(86)$, see Table 1, but $D_1=181$ is not less than $2d=172$.
Also, the condition $D_2<1.5d$ can not be omited because
for degree 44 there is the octanomial $x^{68} + x^{46} + x^{45} + x^{37} + x^{31} + x^{23} + x^{22} + 1$ whose divisor $R_{44}(x)$ has the house 1.038300334 which is equal to $\mathrm{mrp}(44)$, see Table 1, but $D_2=68$ is not less than $1.5d=66$.
For $d=38$ there is the octanomial $x^{62} + x^{45} + x^{43} + x^{34} + x^{28} + x^{19} + x^{17} + 1$ whose divisor $R_{38}(x)$ has the house 1.043402608 which is equal to $\mathrm{mrp}(38)$, see Table 1, but $D_2=62$ is not less than $1.5\cdot38=57$ etc.
\begin{conjecture}\label{sec:plusminusConj}
An extremal reciprocal primitive of degree $d$ can not be a root, neither of an reciprocal octanomial of degree $D_1$ such that $D_1<2d$ and all its inner monomials have minus sign, nor of an reciprocal octanomial of degree $D_2$ such that $D_2<1.5d$ and all its monomials have plus sign.
\end{conjecture}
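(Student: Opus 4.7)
The plan is to upgrade Proposition \ref{sec:plusminusProp}, which verifies the statement computationally for $d\le 180$, into an unconditional argument by supplying a uniform analytic lower bound for the house of any reciprocal octanomial with one of the two forbidden sign patterns. I would argue by contradiction: suppose $R_d(x)\mid P(x)$ with $P$ a reciprocal octanomial of degree $D$ of the prescribed type, and factor $P(x)=R_d(x)Q(x)$ with $Q$ reciprocal of degree $D-d$. The pattern in Table 1 strongly indicates that in every extremal case $Q$ splits into cyclotomic polynomials, whence $\house{Q}=1$ and $\house{P}=\house{R_d}=\mathrm{mrp}(d)$. By the empirical fit of Conjecture \ref{sec:MLS}, the target upper bound for the house of $P$ is of order $1+O(1/d)$.

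The bulk of the work is to prove a matching lower bound. In Case 2 (all coefficients $+1$), I would use $P(1)=8$, the positivity of $P$ on $[0,+\infty)$, and the identity $|P(e^{i\theta})|^2=8+2\sum_{i<j}\cos((e_i-e_j)\theta)$. Integrating $\log|P(e^{i\theta})|$ around the unit circle and invoking Jensen's formula yields $\log M(P)\ge c_0$ for an explicit constant $c_0>0$ depending only on the sparsity and on the minimal gap among the inner exponents. Because $P$ is reciprocal, $\house{P}\ge M(P)^{1/D}\ge e^{c_0/D}\ge e^{2c_0/(3d)}\approx 1+2c_0/(3d)$; if such a $c_0$ can be chosen independently of $d$, a contradiction with the upper bound follows once $d$ exceeds an explicit threshold, and everything below that threshold is already Proposition \ref{sec:plusminusProp}. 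Case 1 (all six inner coefficients equal to $-1$) is parallel, now with $P(1)=-4$, $P(-1)$ determined by the parity of the exponents, and the weaker degree constraint $D<2d$, yielding a bound of the form $\house{P}\ge 1+c_1/(2d)$.

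The main obstacle is obtaining a uniform, explicit $c_0$ (and $c_1$): the cosine sum can become large and negative on long arcs when the exponent differences $e_i-e_j$ share common factors, depressing $|P(e^{i\theta})|$ and driving $M(P)$ close to $1$. I expect the cleanest route is a case analysis on the $\gcd$ structure of the pairwise differences, combined with the rigidity forced by $Q$ being a product of cyclotomic factors $\Phi_n$, which heavily restricts the admissible exponent spacings. As a safety net, Lemma \ref{sec:mddBoundedRec} already gives $\mathrm{mrp}^d(d)\le\alpha^2<7$, so any Mahler-measure argument yielding $M(P)>7$ for the forbidden sign patterns would immediately close the corresponding case and reduce the remaining problem to finite verification.
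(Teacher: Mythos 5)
First, note that the statement you are asked to prove is presented in the paper as Conjecture \ref{sec:plusminusConj}: the paper gives no proof of it, only the finite computational verification of Proposition \ref{sec:plusminusProp} for $d\le 180$ together with explicit examples showing that the degree restrictions $D_1<2d$ and $D_2<1.5d$ cannot be dropped. So you are attempting an open conjecture, and your proposal does not close it; it is a programme with gaps you partly acknowledge, one of which is fatal rather than merely technical.

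The decisive obstruction is quantitative. Your entire lower bound comes from Jensen's formula, i.e.\ from the Mahler measure of the octanomial $P$. But by Landau's inequality $M(P)\le \|P\|_2$, and for a height-one octanomial $\|P\|_2=\sqrt{8}$, so $\log M(P)\le \tfrac12\log 8\approx 1.04$; in particular no choice of exponents can ever give $M(P)>7$, so the proposed ``safety net'' via Lemma \ref{sec:mddBoundedRec} is vacuous. Feeding the best conceivable constant $c_0=\tfrac12\log 8$ into your chain $\house{P}\ge M(P)^{1/D}\ge e^{c_0/D}$ with $D<1.5d$ gives at most $\house{P}\ge 2^{1/d}\approx 1+0.69/d$, whereas the upper bound you want to contradict is $\mathrm{mrp}(d)\approx 1+1.92/d$ according to Conjecture \ref{sec:MLS}. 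The inequality points the wrong way: the strongest lower bound this method can possibly produce sits strictly below the conjectural upper bound, so no contradiction can emerge for large $d$ no matter how carefully you control the cosine sums; the $D<2d$ case is worse still. Two further gaps compound this. The upper bound itself rests on Conjecture \ref{sec:MLS}, which is only an empirical least-squares fit, so even a successful lower bound would yield a conditional result. And a lower bound on $M(P)$ or $\house{P}$ says nothing about $R_d$ unless you first prove that the cofactor $Q=P/R_d$ is a product of cyclotomic polynomials; you infer this from the pattern in Table~1, but it is precisely the kind of statement that would itself require proof, since otherwise all of the mass of $M(P)$ could live on $Q$ while $R_d$ remains extremal.
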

\paragraph*{Acknowledgment.}Partially supported by Serbian Ministry of Education and Science, Project 174032.






\bibliography{Stankovbibfile}

\end{document}